\documentclass{amsart}
\usepackage{amssymb,amsmath,latexsym}
\usepackage{times}
\usepackage{graphicx}
\newtheorem{thm}{Theorem}[section]
\newtheorem{lem}[thm]{Lemma}
\newtheorem{cor}[thm]{Corollary}

\numberwithin{equation}{section}

\newcommand{\co}{\hbox{\rm co}}
\newcommand{\si}{\hbox{\rm si}}

\title[Representations of bicircular lift matroids]
{Representations of bicircular lift matroids}

\date{\today}

\author[Rong Chen and Zifei Gao]{Rong Chen and Zifei Gao}
\address{Center for Discrete Mathematics, Fuzhou University,
Fuzhou, P. R. China.}

\thanks{Email address: rongchen@fzu.edu.cn (Rong Chen).\\
\ \ \ This research was supported by NSFC (No.11201076). The first author was also partially supported  by NSFC (No. 11471076).}

\begin{document}
\begin{abstract}
Bicircular lift matroids are a class of matroids defined on the edge set of a graph. For a given graph $G$, the circuits of its bicircular lift matroid are the edge sets of those subgraphs of $G$ that contain at least two cycles, and are minimal with respect to this property. The main result of this paper is a characterization of when two graphs give rise to the same bicircular lift matoid, which answers a question proposed by Irene Pivotto. In particular, aside from some appropriately defined ``small'' graphs, two graphs have the same bicircular lift matroid if and only if they are $2$-isomorphic in the sense of Whitney. \\

{\it Key Words:} bicircular lift matroids, representation.
\end{abstract}

%\footnote{
%Email: rongchen@fzu.edu.cn.

%The research supported partially by CNNSF (No.11201076), SRFDP (No.20113514120010) and JA11032.  }

\maketitle

\section{Introduction}
We assume the reader is familiar with fundamental definitions in matroid and graph theory. For a graph $G$, a set $X\subseteq E(G)$ is a {\sl cycle} if $G|X$ is a connected 2-regular graph. Bicircular lift matroids are a class of matroids defined on the edge set of a graph. For a given graph $G$, the circuits of its bicircular lift matroid $L(G)$ are the edge sets of those subgraphs of $G$ that contain at least two cycles, and are minimal with respect to this property. That is, the circuits of $L(G)$ consists of the edge sets of two edge-disjoint cycles with at most one common vertex, or three internally disjoint paths between a pair of distinct vertices. Bicircular lift matroids are a special class of lift matroids that arises from biased graphs. Biased graphs and lift matroids were introduced by Zaslavsky in \cite{Zas89,Zas91}.

Whitney \cite{Whitney} characterized which graphs have isomorphic graphic matroids. Chen, DeVos, Funk and Pivotto \cite{CDFP} generalized Whitney's result and characterized which biased graphs have isomorphic graphic frame matroids. Matthews \cite{Matthews} characterized which graphs give rise to isomorphic bicircular matroids that are graphic. Coullard, del Greco and Wagner \cite{CGW,Wagner} characterized which graphs give rise to isomorphic bicircular matroids. In this paper, we characterize which graphs give rise to isomorphic bicircular lift matroids, which answers a question proposed by Pivotto in the Matroid Union blog \cite{Irene}. In particular, except for some special graphs, each of which is a subdivision of a graph on at most four vertices, two graphs have the same bicircular lift matroid if and only if they are $2$-isomorphic in the sense of Whitney \cite{Whitney}. The main result is used in \cite{Chen16} to prove that the class of matroids that are graphic or bicircular lift has a finite list of excluded minors.

To state our result completely we need more definitions. Let $k,l,m$ be positive integers. We denote by $K_m$ the complete graph with $m$ vertices. We denote by $K_2^m$ the graph obtained from $K_2$ with its unique edge replaced by $m$ parallel edges. And we denote by $K_3^{k,l,m}$ the graph obtained from $K_3$ with its three edges replaced by $k,l,m$ parallel edges respectively. A graph obtained from graph $G$ by replacing some edges of $G$ with internally disjoint paths is a {\sl subdivision} of $G$. Note that $G$ is a subdivision of itself. A path $P$ of a connected graph $G$ is an {\sl ear} if each internal vertex of $P$ has degree two and each end-vertex has degree at least three in $G$, and $P$ is contained in a cycle. A graph $G$ is {\sl $2$-edge-connected} if each edge of $G$ is contained in some cycle. Let $M(G)$ denote the graphic matroid of a graph $G$.

%For an integer $m\geq2$, let $K_2^m$ be the graph obtained from $K_2$ with its unique edge replaced by $m$ parallel edges.

Given a set  $X$ of edges, we let $G|X$ denote the subgraph of $G$ with edge set $X$ and no isolated vertices. Let $(X_1,X_2)$ be a partition of $E(G)$ such that $V(G|X_1)\cap V(G|X_2)=\{u_1,u_2\}$. We say that $G'$ is obtained by a {\sl Whitney Switching} on $G$ on $\{u_1,u_2\}$ if $G'$ is a graph obtained by identifying vertices $u_1,u_2$ of $G|X_1$ with vertices $u_2,u_1$ of $G|X_2$, respectively. A graph $G'$ is {\sl 2-isomorphic} to $G$ if $G'$ is obtained from $G$ by a sequence of the operations: Whitney switchings, identifying two vertices from distinct components of a graph, or partitioning a graph into components each of which is a block of the original graph.

\begin{thm}(Whitney's\ $2$-Isomorphism\ Theorem)\label{Whitney}
Let $G_1$ and $G_2$ be graphs. Then $M(G_1)\cong M(G_2)$ if and only if $G_1$ and $G_2$ are $2$-isomorphic.
\end{thm}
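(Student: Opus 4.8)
The plan is to prove the two directions separately, treating sufficiency as routine and concentrating effort on necessity. For sufficiency, I would use the fact that the circuits of $M(G)$ are exactly the cycles of $G$, so it suffices to check that each of the three generating operations---identifying two vertices from distinct components, partitioning a graph into its blocks, and a Whitney switching---leaves the family of edge sets that form cycles unchanged (as subsets of the common edge set $E(G_1)=E(G_2)$). The first two operations only split or merge the graph at cut vertices, across which no cycle can pass, so the cycle family is visibly unchanged. For a Whitney switching on $\{u_1,u_2\}$ relative to a partition $(X_1,X_2)$ of the edges, I would verify directly that a set $C\subseteq X_1\cup X_2$ induces a cycle before the switch if and only if it does afterwards: a cycle contained in $X_1$ or in $X_2$ is unaffected, while a cycle meeting both parts must consist of a $u_1u_2$-path in $G|X_1$ together with a $u_1u_2$-path in $G|X_2$, a structure manifestly preserved by interchanging the identifications of $u_1$ and $u_2$.

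For necessity, suppose $\varphi$ is an isomorphism from $M(G_1)$ to $M(G_2)$; I first normalize to reduce to the $2$-connected case. The connected components of a matroid (under the relation that two elements lie in a common circuit) correspond exactly to the blocks of the graph, with matroid loops and coloops corresponding to graph loops and bridges. Hence $\varphi$ carries blocks of $G_1$ to blocks of $G_2$ with isomorphic matroids, and since the operations of identifying vertices from distinct components and partitioning into blocks allow us to freely re-glue blocks at cut vertices, it suffices to show that two $2$-connected graphs with isomorphic matroids are $2$-isomorphic. I would then argue by induction on the number of edges, splitting according to the connectivity of the matroid.

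If $M(G_1)$ is $3$-connected, then both $G_1$ and $G_2$ are $3$-connected graphs---matroid $3$-connectivity forces graph $3$-connectivity in the $2$-connected case, modulo a few small graphs checked by hand---and a $3$-connected graph is determined up to isomorphism by its cycle matroid, since its vertex stars can be recovered as a distinguished family of cocircuits; thus $G_1\cong G_2$ and in particular they are $2$-isomorphic. Otherwise $M(G_1)$ has a $2$-separation, which I would realize as a $2$-vertex-cut $\{u_1,u_2\}$ of $G_1$, and correspondingly of $G_2$ via $\varphi$, splitting each graph into two smaller pieces sharing $u_1$ and $u_2$. Applying the inductive hypothesis to the pieces makes them pairwise $2$-isomorphic, and the only ambiguity in reassembling the two pieces of $G_2$ to match $G_1$---namely which of $u_1,u_2$ is identified with which---is precisely a Whitney switching, so $G_1$ and $G_2$ are $2$-isomorphic.

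The main obstacle is the geometric content of necessity: showing that an abstract $2$-separation of the matroid is induced by an actual $2$-vertex-cut of the graph, and that the two ways of stitching the sides together differ exactly by a Whitney switching. Equivalently, the heart of the argument is the rigidity statement in the $3$-connected base case---that the vertex set, which is not part of the matroid data, can nonetheless be reconstructed from the cocircuit structure precisely when no $2$-separation is available to introduce switching ambiguity. Once this correspondence between matroid $2$-separations and graph $2$-cuts is established, the induction runs smoothly and the three $2$-isomorphism operations exactly capture every degree of freedom left undetermined by the matroid.
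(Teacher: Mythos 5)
This statement is Whitney's classical $2$-isomorphism theorem, which the paper does not prove at all: it is imported as known background, cited to Whitney's 1933 paper, and used as a black box (e.g.\ in Lemma \ref{contract-G} and the proof of Theorem \ref{1-con}). So there is no in-paper proof to compare against; what you have written is an outline of the standard modern proof, essentially Truemper's induction as presented in Oxley's book: sufficiency by checking the three operations preserve the cycle family, necessity by reducing to blocks, a $3$-connected base case (unique representability via recovering vertex stars as distinguished cocircuits, i.e.\ the non-separating cocircuits), and an induction through $2$-separations. That skeleton is the right one, and your sufficiency direction and block reduction are fine as stated.

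There is, however, a genuine gap at the crux of your inductive step. First, an exact $2$-separation $(A,B)$ of $M(G_1)$ with $G_1$ $2$-connected need not have connected sides: the sides can each break into $k\geq 2$ components arranged in a ``generalized cycle'' sharing $2k$ vertices in total, so realizing the separation as a single $2$-vertex cut $\{u_1,u_2\}$ requires an extra regrouping argument (take one component of a side together with its complement). Second, and more seriously, after you apply the inductive hypothesis to the two pieces you only know each pair of pieces is $2$-isomorphic \emph{as graphs}; a $2$-isomorphism of a piece need not carry the attachment pair $\{u_1,u_2\}$ of $G_1$'s piece to the attachment pair of $G_2$'s piece, so ``the only ambiguity in reassembling is a Whitney switching'' does not follow. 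The standard repair is the basepointed $2$-sum device: add a marker edge $e$ joining the two attachment vertices on each side, so that $M(G_i)$ is the $2$-sum of the two augmented cycle matroids along $e$, and apply induction to the augmented pieces; since the matroid isomorphism extends with $e\mapsto e$, the inductive $2$-isomorphisms respect the attachment pairs, and the reassembly ambiguity really is a single switching. Your closing paragraph correctly identifies this rigidity question as the heart of the matter, but the proposal as written does not supply the mechanism that resolves it, and without the marker edge the induction does not close.
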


It follows from Theorem \ref{Whitney} that if $G_1$ and $G_2$ are $2$-isomorphic, then $L(G_1 )= L(G_2 )$. The converse, however, is not true. This can be seen by choosing $G_1$ and $G_2$ to be isomorphic to $K_4$, but not to each other. Much of the remainder of the paper is aimed at characterizing when the converse to this statement is not true.

Let $G_1$ and $G_2$ be graphs with $L(G_1 )= L(G_2 )$. Since $E(G_i)$ is independent in $L(G_i)$ if and only if $G_i$ has at most one cycle, we may assume that $G_1$ and $G_2$ have at least two cycles. Moreover, since $e$ is a cut-edge of $G_1$ if and only if $e$ is a cut-edge of $G_2$ or $G_2\backslash e$ is a forest, an edge is a cut-edge of $G_1$ if and only if it is a cut-edge of $G_2$. Hence,  to simplify the analysis below, it will be assumed for the remainder of the paper that $G_1$ and $G_2$ are 2-edge-connected. Observe that when $L(G_1 )$ has only one circuit, it is straightforward to characterize the structure of both $G_1$ and $G_2$. Thus, the remainder of the paper will further restrict the analysis to the case that $L(G_1 )$ has at least two circuits. In the paper, we prove

\begin{thm}\label{1-con}
Let $G_1$ be a $2$-edge-connected graph such that $L(G_1)$ contains at least two circuits. Let $G_2$ be a graph with $L(G_1 )=L(G_2 )$. Then at least one of the following holds.
\begin{itemize}
    \item[(1)] $G_1$ and $G_2$ are $2$-isomorphic.
    \item[(2)] $G_1$ and $G_2$ are $2$-isomorphic to subdivisions of $K_4$, where the edge set of an ear of $G_1$ is also the edge set of an ear of $G_2$.
    \item[(3)] $G_1$ and $G_2$ are $2$-isomorphic to subdivisions of $K_3^{m,2,n}$ for some $m\in\{1,2\}$ and $n\geq2$, where the edge set of an ear of $G_1$ is also the edge set of an ear of $G_2$. Moreover, when $n\geq3$, the $n$ ears in $G_1$ having the same ends also have the same ends in $G_2$. 
    %internally disjoint paths connecting two vertices in  $G_1$ with each internal vertex of degree-2 is also the $n$ internally disjoint paths connecting two vertices in $G_2$ with each internal vertex of degree-2.
    \item[(4)] $G_1$ and $G_2$ are $2$-isomorphic to the graphs pictured in Figure \ref{Figure 1} .
\end{itemize}
\end{thm}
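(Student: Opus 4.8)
\noindent\emph{Proof strategy.} The plan is to reconstruct the cycle family of each $G_i$ from the common matroid $N:=L(G_1)=L(G_2)$ and then to invoke Whitney's $2$-Isomorphism Theorem. By the reduction preceding the statement we may assume $G_2$ is also $2$-edge-connected, so each $G_i$ is connected with a cycle; hence $r(N)=|V(G_1)|=|V(G_2)|$ and $|E(G_1)|=|E(G_2)|$, and the small circuits of $N$ already pin down the parallel and series structure common to the two graphs. The target is to prove that, outside the exceptional configurations, the circuits of $M(G_1)$ and $M(G_2)$ — that is, the cycles of $G_1$ and $G_2$ — coincide as subsets of the common ground set $E$. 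Since a matroid is determined by its circuits, this gives $M(G_1)=M(G_2)$, and Theorem \ref{Whitney} then yields that $G_1$ and $G_2$ are $2$-isomorphic, which is conclusion $(1)$.

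The key tool I would develop is a matroid-intrinsic description of cycles. A direct computation shows that for a cycle $Z$ of $G_i$ the lift-closure $\mathrm{cl}_N(Z)$ is exactly the set of edges with both ends in $V(Z)$ (an edge extends $Z$ to a second cycle precisely when it is a chord or is parallel to an edge of $Z$), and that $r_N(\mathrm{cl}_N(Z))=|Z|$. Dually, each circuit $C$ of $N$ has two possible shapes: a \emph{theta} (three internally disjoint paths between two vertices), whose three pairwise path-unions are cycles of $G_i$, or a \emph{figure-eight} (two edge-disjoint cycles meeting in at most one vertex), which contributes only its two lobes. One reads cycles of $G_i$ off the circuits of $N$ by decomposing each $C$ into the cycles it contains; the essential point to verify is that, generically, this decomposition is forced by $N$ and so agrees for $i=1,2$.

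The main obstacle — and the source of cases $(2)$–$(4)$ — is that theta and figure-eight circuits need \emph{not} be distinguishable inside $N$: the restriction $N|C$ is the uniform matroid $U_{|C|-1,|C|}$ in either case, so the distinction can only come from how $C$ sits inside $N$, and the closure test above separates the two shapes only when the third path of a theta is a single chord. Subdivision therefore destroys the distinction locally, which is exactly why the exceptions are subdivisions of small graphs. I would make this quantitative by passing to the \emph{branch graph} obtained by suppressing all degree-two vertices, so that ears become single edges and cycles of $G_i$ correspond to cycles of the branch graph. The heart of the argument is to show that if the branch graph has enough branch vertices joined by enough internally disjoint paths, then the cycle reconstruction is forced and conclusion $(1)$ holds; the ambiguous remainder consists of branch graphs on at most four branch vertices. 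Re-inserting subdivisions, these are precisely subdivisions of $K_4$ (case $(2)$), of $K_3^{m,2,n}$ with $m\in\{1,2\}$ and $n\ge 2$ (case $(3)$), and the sporadic configurations of case $(4)$; here the property $\mathrm{cl}_N(Z)=\{$edges with both ends in $V(Z)\}$ is what forces the edge set of an ear of $G_1$ to remain the edge set of an ear of $G_2$, and, when $n\ge 3$, forces the $n$ parallel ears of $K_3^{m,2,n}$ to keep common ends.

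Finally, I would close the exceptional cases by a direct finite analysis: for each small branch graph listed above one enumerates the graphs sharing its lift matroid, exhibits a genuinely non-$2$-isomorphic partner realising the same $N$ (so that the exceptions are real), and checks the stated ear refinements. I expect the delicate part to be precisely the theta-versus-figure-eight separation together with the uniform book-keeping needed to treat long ears and high edge-multiplicities at once; the exceptional families $(2)$–$(4)$ arise exactly at the branch graphs where this separation ceases to be a matroid invariant.
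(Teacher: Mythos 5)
Your strategy is genuinely different from the paper's (the paper proves the theorem by induction on $|V|$, contracting one edge at a time: $L(G)/e=L(G/e)$ for links, a spanning-tree lemma showing that if $G_1/e$ and $G_2/e$ are $2$-isomorphic for \emph{every} edge $e$ then $M(G_1)\cong M(G_2)$, and finite base cases $|V(G_i)|\leq 5$), but as it stands your proposal has two genuine gaps. First, the closure formula at the heart of your reconstruction is wrong in the presence of loops: if $Z$ is a cycle of $G_i$ and $\ell$ is a loop at \emph{any} vertex, even one outside $V(Z)$, then $Z\cup\{\ell\}$ is a figure-eight circuit of $N$, so every loop of $G_i$ lies in $\mathrm{cl}_N(Z)$, not only edges spanned by $V(Z)$. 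This is not a cosmetic issue, because the matroid does not determine which elements are loops: a link of $G_1$ can be a loop of $G_2$, and this ambiguity is precisely the source of part of case $(4)$ (the paper's Lemma \ref{loop-case}, Figure \ref{Figure 3}). Your opening assertion that ``the small circuits of $N$ already pin down the parallel and series structure common to the two graphs'' therefore assumes away one of the exceptional families; you would need to isolate and resolve the link-versus-loop case before the branch-graph reduction is even well defined.

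Second, and more seriously, the generic step is asserted rather than proved: the claim that ``if the branch graph has enough branch vertices joined by enough internally disjoint paths, then the cycle reconstruction is forced'' \emph{is} the theorem, and your proposal supplies no mechanism for it. By your own observation the closure test separates theta from figure-eight circuits only when the third path of a theta is a single chord, and you give no argument for bootstrapping from chordal thetas to circuits all of whose constituent paths are long ears, which is exactly the hard configuration on large graphs. The paper sidesteps any such global reconstruction: Lemma \ref{contract-G} reduces everything to single-edge contractions, so the only place where circuits of $N$ are compared by hand is in the base cases $|V(G_i)|\leq 5$ (Lemmas \ref{n<4}, \ref{n=4}, \ref{n=5}), where the enumeration is finite. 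If you want to pursue your route, the missing lemma you must prove is roughly: for a cosimple loopless graph on at least five vertices, the partition of each circuit of $N$ into the cycles it contains is determined by $N$; nothing in the proposal establishes this, and without it the conclusion $M(G_1)=M(G_2)$ in the generic case does not follow.
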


 \begin{figure}[htbp]
\begin{center}
\includegraphics[page=1,height=10cm]{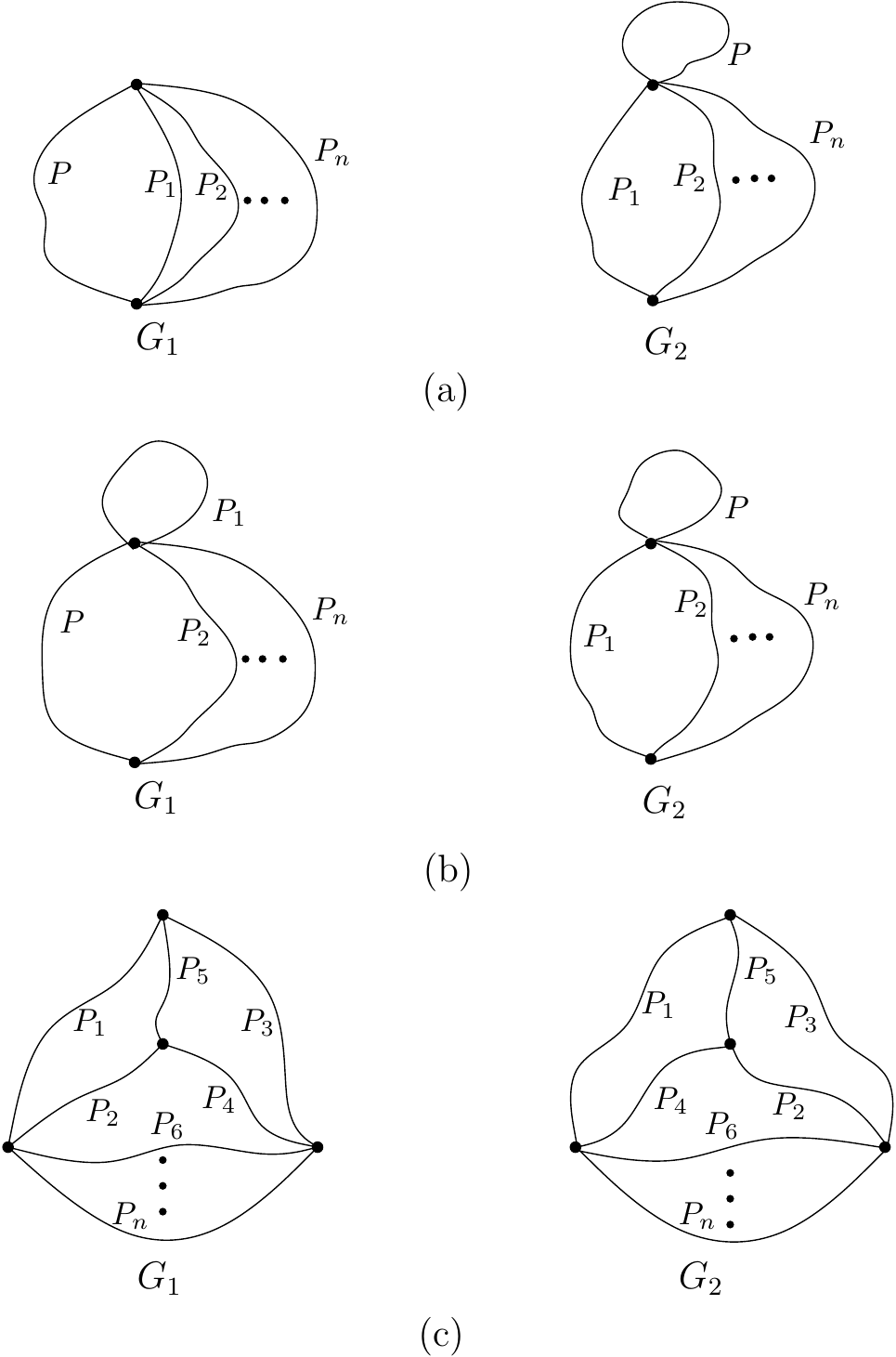}
\caption{In (a) and (b), $n\geq3$; and in (c), $n\geq2$}
\label{Figure 1}
\end{center}
\end{figure}

The following result, which is an easy consequences of Theorem \ref{1-con}, is used in \cite{Chen16} to prove that the class of matroids that are graphic or bicircular lift matroids has a finite list of excluded minors.

Two elements are a {\sl series pair} of a graph $G$ if and only if each cycle can not intersect them in exactly one element.  A {\sl series class} is a maximal set $X\subseteq E(G)$ such that every two edges of $X$ form a series pair. Let $\co(G)$ denote a graph obtained from $G$ by contracting all cut-edges from $G$ and then, for each series class $X$, contracting all but one distinguished element of $X$.

\begin{cor}\label{1-con+}
Let $G_1$ and $G_2$ be connected graphs with $L(G_1)=L(G_2)$ and such that $L(G_1)$ has at least two circuits. If $|V(\co(G_1))|\geq5$ then $G_1$ and $G_2$ are $2$-isomorphic.
\end{cor}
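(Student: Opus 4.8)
The plan is to derive Corollary \ref{1-con+} from Theorem \ref{1-con} by showing that in each of the exceptional outcomes (2)--(4) the reduced graph $\co(G_1)$ has at most four vertices; taking the contrapositive then forces outcome (1) whenever $|V(\co(G_1))|\ge 5$.

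First I would reduce to the $2$-edge-connected case so that Theorem \ref{1-con} applies. As noted in the discussion preceding Theorem \ref{1-con}, an edge is a cut-edge of $G_1$ if and only if it is a cut-edge of $G_2$ (the alternative that $G_2\setminus e$ is a forest is excluded because $L(G_1)$ has at least two circuits). A cut-edge lies in no cycle, hence is a coloop of $L(G_i)$, so contracting all cut-edges from both graphs leaves the lift matroids equal and produces $2$-edge-connected graphs $G_1',G_2'$. Since contraction of cut-edges is exactly the first step in forming $\co$, we have $\co(G_1)=\co(G_1')$; thus it suffices to prove the statement for $G_1',G_2'$, and I will assume $G_1,G_2$ are $2$-edge-connected.

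The conceptual heart of the argument is that $|V(\co(G))|$ depends only on the cycle matroid $M(G)$, and is therefore a $2$-isomorphism invariant. Indeed, an edge is a cut-edge precisely when it is a coloop of $M(G)$, and two edges form a series pair in the sense defined above---every cycle meets them both or neither---precisely when they are in series in $M(G)$; hence both the cut-edges and the series classes of $G$ are read off from $M(G)$. Forming $\co(G)$ corresponds to contracting the coloops of $M(G)$ and contracting all but one element of each series class, which yields a matroid determined by $M(G)$; and since $\co(G)$ is connected (it is obtained from the connected graph $G$ by contractions), $|V(\co(G))|=r(M(\co(G)))+1$ is a function of $M(G)$ alone. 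By Theorem \ref{Whitney}, $2$-isomorphic graphs have isomorphic cycle matroids, so $|V(\co(\cdot))|$ is constant on each $2$-isomorphism class.

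Finally I would apply Theorem \ref{1-con}. If outcome (1) fails, then $G_1$ is $2$-isomorphic to a subdivision $H$ of one of $K_4$, $K_3^{m,2,n}$ with $m\in\{1,2\}$ and $n\ge 2$, or a graph pictured in Figure \ref{Figure 1}; in every case $H$ is a subdivision of a graph $H_0$ on at most four vertices. By the invariance above, $|V(\co(G_1))|=|V(\co(H))|$. Since subdividing an edge creates a series class and $\co$ contracts each series class to a single edge, $\co(H)$ is a contraction of $H_0$, whence $|V(\co(H))|\le |V(H_0)|\le 4$. This contradicts the hypothesis $|V(\co(G_1))|\ge 5$, so outcome (1) must hold and $G_1,G_2$ are $2$-isomorphic. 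The step requiring the most care is the invariance claim---matching the graph-theoretic notions of cut-edge and series pair with their matroid counterparts and verifying that $\co$ is governed by $M(G)$---together with the routine check that $\co$ applied to the Figure \ref{Figure 1} graphs leaves at most four vertices.
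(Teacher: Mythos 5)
Your proposal is correct and follows exactly the route the paper intends: the paper gives no written proof, stating only that the corollary is ``an easy consequence'' of Theorem \ref{1-con}, and your argument---reduce to the $2$-edge-connected case by contracting the common cut-edges, observe that $|V(\co(\cdot))|$ is a $2$-isomorphism invariant via $M(G)$ and Theorem \ref{Whitney}, and note that every graph in outcomes (2)--(4) is a subdivision of a graph on at most four vertices so that $\co$ collapses it to at most four vertices---is precisely that intended derivation, carried out correctly. The only steps you gloss (lifting $2$-isomorphism of $G_1/S$ and $G_2/S$ back to $G_1$ and $G_2$ by restoring the common coloops $S$ to their equal cycle matroids, and the routine check on the Figure \ref{Figure 1} graphs) are indeed routine and do not constitute gaps.
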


%Let $G$ a connected graph with cycles and $N$ be a matroid with $N=L(G)$. Note that since a spanning subgraph of $G$ with exactly one cycle is a basis of $N$, we have $r(N)=|G|$.

%\begin{cor}
%Let $f$ be an element of a connected bicircular lift matroid $N$ and $N_f\in\{N\backslash f, N/f\}$. Let $G, G_1$ be graphs with $N=L(G)$ and $N_f=L(G_1)$. Set $G_f:=G-f$ when $N_f=N\backslash f$, and $G_f:=G/f$ when $N_f=N/f$. Assume that $N_f$ is connected with at least two circuits and $e$ is a loop of $G$ and $G_1$. Then $G_1$ and $G_f$ are $2$-isomorphic.
%\end{cor}

%\begin{proof}
%Evidently, $N_f=L(G_f)$ and without loss of generality we may assume that $G,G_1$ are connected. Since $N$ and $N_f$ are connected, $G,G_1$ and $G_f$ are $2$-edge connected. When $|V(\co(G_1))|\geq5$,  the result follows from immediately Corollary \ref{1-con+}. So we may assume that $|V(\co(G_1))|\leq4$. Since $N_f$ has at least two circuits and $e$ is a loop of $G_1$ and $G_f$, by Theorem \ref{1-con}, $G_1$ and $G_f$ are 2-isomorphic. 
%\end{proof}

\section{Proof of Theorem \ref{1-con}.}
Let $G$ be a graph, and $e,f\in E(G)$. We say that $e$ is a {\sl link} if it has distinct end-vertices; otherwise $e$ is a {\sl loop}. If $\{e,f\}$ is a cycle, then $e$ and $f$ are {\sl parallel}. A {\sl parallel class} of $G$ is a maximal subset $P$ of $E(G)$ such that any two members of $P$ are parallel and no member is a loop. Moreover, if $|P|\geq2$ then $P$ is {\sl non-trivial}; otherwise $P$ is {\sl trivial}. Let $\si(G)$ denote the graph obtained from $G$ by deleting all loops and all but one distinguished element of each non-trivial parallel class. Obviously, the graph we obtain is uniquely determined up to a renaming of the distinguished elements. If $G$ has no loops and no non-trivial parallel class, then $G$ is {\sl simple}.

%A set $X\subseteq E(G)$ is a {\sl bond} of $G$ if $X$ is a minimal set such that $G-X$ has exactly one more component than $G$. A bond is a {\sl coloop} if it has only one edge. A {\sl series class} of $G$ is a maximal subset $X$ of $E(G)$ with $|X|\geq1$ such that any two distinct members of $X$ is a bond of $G$. Hence, an ear is a subset of some series class but not every series class is an ear. Observe that each graph is 2-isomorphic to one in which every series class is an ear. Let $\co(G)$ denote the graph obtained from $G$ by contracting all coloops from $G$ and then, for each series class $X$, contracting all but one distinguished element of $X$. Obviously, the graph we obtain is uniquely determined up to a renaming of the distinguished elements. We call that $G$ is {\sl cosimple} if $G$ has no coloops or non-trivial parallel classes.

The following result is implied in (\cite{Zas91}, Theorem 3.6.).

\begin{lem}\label{contract-L}
Let $e$ be an edge of a graph $G$. Then we have
 \begin{itemize}
    \item[(1)] $L(G\backslash e)=L(G)\backslash e$;
    \item[(2)] when $e$ is a loop, $L(G)/e=M(G\backslash e)$;
    \item[(3)] when $e$ is a link, $L(G)/e=L(G/e)$.
 \end{itemize}
\end{lem}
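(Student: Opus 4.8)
The plan is to reduce all three parts to a single reformulation of $L(G)$ in terms of the cycle space. For $X\subseteq E(G)$ write $d(X)$ for the dimension of the cycle space of $G|X$, equivalently $d(X)=|X|-|V(G|X)|+c(G|X)$, where $c(G|X)$ is the number of connected components. By the definition of $L(G)$ its circuits are the minimal edge sets $X$ with $G|X$ containing at least two cycles, i.e. the minimal $X$ with $d(X)\geq 2$ (a theta subgraph and each kind of handcuff has cycle space dimension $2$). Consequently $X$ is independent in $L(G)$ if and only if $d(X)\leq 1$; this is the only property of $L(G)$ I will use, and each part of the lemma then becomes a statement about how $d$ behaves under deletion and contraction. (For $X=E(G)$ this reformulation is exactly the remark in the introduction that $E(G)$ is independent iff $G$ has at most one cycle.)

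For (1), if $X\subseteq E(G)\setminus\{e\}$ then $(G\backslash e)|X=G|X$, so $d(X)$ is computed identically in $G$ and in $G\backslash e$. Hence $X$ is independent in $L(G\backslash e)$ iff $d(X)\leq 1$ iff $X$ is independent in $L(G)$ and avoids $e$, which is precisely independence in $L(G)\backslash e$.

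For (2), when $e$ is a loop it is a cycle and is edge-disjoint from every $X\subseteq E(G)\setminus\{e\}$, so $d(X\cup\{e\})=d(X)+1$ (adding an edge-disjoint loop raises the cyclomatic number by exactly one, regardless of whether its vertex already lies in $G|X$). A set $X$ is independent in $L(G)/e$ iff $X\cup\{e\}$ is independent in $L(G)$, i.e. iff $d(X\cup\{e\})\leq 1$, i.e. iff $d(X)=0$, i.e. iff $G|X$ is a forest; since the forests of $G\backslash e$ are exactly the independent sets of $M(G\backslash e)$, this gives $L(G)/e=M(G\backslash e)$. For (3), the key computation is that contracting a link preserves the cyclomatic number: if $e$ is a link of a graph $H$ then $|E(H)|$ and $|V(H)|$ each drop by one under contraction and the number of components is unchanged, so $d(H/e)=d(H)$. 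Applying this to $H=G|(X\cup\{e\})$ (which contains $e$ as a link) and identifying $H/e$ with $(G/e)|X$ up to isolated vertices yields $d_G(X\cup\{e\})=d_{G/e}(X)$ for every $X\subseteq E(G)\setminus\{e\}$. Therefore $X\cup\{e\}$ is independent in $L(G)$ iff $X$ is independent in $L(G/e)$, which is exactly the identity $L(G)/e=L(G/e)$.

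The main obstacle is the graph-level bookkeeping in part (3): I must verify that $H/e$ and $(G/e)|X$ genuinely agree up to isolated vertices in all configurations — in particular when $X$ meets neither endpoint of $e$, and, more delicately, when $X$ contains edges parallel to $e$ (which become loops after contraction) or when the identification of the two endpoints of $e$ creates new parallel pairs. None of these alter $d$, but one has to confirm that the edge-to-vertex incidences really match after contraction so that the invariance $d(H/e)=d(H)$ transfers to the subgraph comparison. Parts (1) and (2) are immediate once the cycle-space reformulation is in place.
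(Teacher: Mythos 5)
Your proof is correct, but it is a genuinely different route from the paper's, because the paper does not prove this lemma at all: it states that the result is ``implied in (\cite{Zas91}, Theorem 3.6)'', i.e.\ it is a specialization of Zaslavsky's general deletion--contraction theory for lift matroids of biased graphs (a bicircular lift matroid is the lift matroid of a biased graph with no balanced cycles, and in that framework contracting an unbalanced loop yields the graphic matroid of the rest, which is part (2)). Your argument is instead self-contained and elementary: you characterize independence in $L(G)$ by the single inequality $d(X)\leq 1$ on the cyclomatic number, and then track how $d$ transforms under deletion, loop contraction, and link contraction. The two reductions you rely on are both sound. First, independence in $L(G)$ really is equivalent to $d(X)\leq 1$, since $d$ is monotone under taking subgraphs and the minimal sets with $d\geq 2$ are exactly the stated circuits (thetas, two cycles meeting in one vertex, and two disjoint cycles all have $d=2$). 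Second, the matroid identity ``$X$ is independent in $L(G)/e$ if and only if $X\cup\{e\}$ is independent in $L(G)$'' is legitimate because $\{e\}$ is independent in $L(G)$ for every edge $e$, graph loops included (a single edge has $d\leq 1$, so no edge is a matroid loop of $L(G)$). Your loop computation $d(X\cup\{e\})=d(X)+1$ is right in both cases (the vertex of $e$ in or not in $V(G|X)$), and the bookkeeping you flag in (3) resolves exactly as you say: contraction of $e$ changes edge--endpoint incidences identically whether performed in $G$ or in the subgraph $H=G|(X\cup\{e\})$, parallel edges of $e$ become loops in both, and the only possible discrepancy between $H/e$ and $(G/e)|X$ is an isolated merged vertex, which leaves $d$ unchanged. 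In sum: the paper's citation buys brevity and generality (the identities hold for arbitrary biased graphs), while your argument buys transparency, isolating the cyclomatic number as the one invariant that controls all three parts.
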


Corollary \ref{loop-iso} follows immediately from Lemma \ref{contract-L} (2) and Theorem \ref{Whitney}. %The corollary will be used without reference. 

\begin{cor}\label{loop-iso}
Let $G_1, G_2$ be graphs with $L(G_1)=L(G_2)$, and $e$ a loop of both $G_1$ and $G_2$. Then $G_1$ and $G_2$ are $2$-isomorphic.
\end{cor}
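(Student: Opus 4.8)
The plan is to contract the common loop $e$, reduce to the graphic case, and then reattach $e$ using the block structure built into the definition of $2$-isomorphism. First, since $L(G_1)=L(G_2)$ holds as an equality of matroids on the common ground set $E(G_1)=E(G_2)$, contracting the element $e$ from both sides gives $L(G_1)/e=L(G_2)/e$. Because $e$ is a loop of both $G_1$ and $G_2$, Lemma \ref{contract-L}(2) applies to each graph and yields $L(G_i)/e=M(G_i\backslash e)$ for $i\in\{1,2\}$. Combining these equalities, I obtain $M(G_1\backslash e)=M(G_2\backslash e)$; in particular these graphic matroids are isomorphic, via the identity map on $E(G_1)\backslash\{e\}$.

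Next I would invoke Whitney's $2$-Isomorphism Theorem (Theorem \ref{Whitney}) with $M(G_1\backslash e)\cong M(G_2\backslash e)$ to conclude that $G_1\backslash e$ and $G_2\backslash e$ are $2$-isomorphic. It then remains to promote this $2$-isomorphism of the loop-deleted graphs to one of $G_1$ and $G_2$ themselves. The key observation is that a loop constitutes a block on its own, and the operations generating $2$-isomorphism include both partitioning a graph into its blocks and identifying two vertices from distinct components. Hence $G_1$ is $2$-isomorphic to the disjoint union of $G_1\backslash e$ with the single-vertex graph carrying the loop $e$, obtained by splitting off the block $\{e\}$; likewise $G_2$ is $2$-isomorphic to the disjoint union of $G_2\backslash e$ with that same loop component. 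Since $2$-isomorphism is an equivalence relation and $G_1\backslash e$ is $2$-isomorphic to $G_2\backslash e$, chaining these equivalences shows that $G_1$ and $G_2$ are $2$-isomorphic.

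The only point requiring care, and the step I would flag as the main (if minor) obstacle, is the reattachment of $e$. One must check that detaching and reattaching a loop is faithfully captured by the allowed operations: splitting off $\{e\}$ as a block is exactly the \emph{partition into blocks} move, while gluing the loop back at the required vertex is an instance of \emph{identifying two vertices from distinct components}. Because the loop can be reinserted at any chosen vertex, the possibly different base-vertices of $e$ in $G_1$ and in $G_2$ pose no difficulty, and the argument goes through without appealing to any finer structure of the intermediate $2$-isomorphism between $G_1\backslash e$ and $G_2\backslash e$.
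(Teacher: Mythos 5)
Your proposal is correct and takes essentially the same route as the paper, which derives the corollary ``immediately'' from Lemma \ref{contract-L}(2) and Theorem \ref{Whitney}: contract the common loop to get $M(G_1\backslash e)=M(G_2\backslash e)$, then apply Whitney's $2$-Isomorphism Theorem. Your explicit verification that the loop can be split off as a block and reattached by vertex identification merely spells out the step the paper treats as immediate, and it is carried out correctly.
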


The idea used to prove the following Lemma was given by the referee.

\begin{lem}\label{contract-G}
Let $G_1$ and $G_2$ be connected graphs without loops and with $|V(G_1)|=|V(G_2)|$ and $E(G_1)=E(G_2)$. Assume that for each edge $e\in E(G_1)$ the graphs $G_1/e$ and $G_2/e$ are $2$-isomorphic. Then $G_1$ and $G_2$ are $2$-isomorphic.
 \begin{proof}
 By Whitney's 2-Isomorphism Theorem, to prove the result it suffices to show that each spanning tree of $G_1$ is also a spanning tree of $G_2$. Let $T_1$ be a spanning tree of $G_1$, and let $T_2$ be the subgraph of $G_2$ induced by $E(T_1)$. Assume that $T_2$ is not a spanning tree of $G_2$. Since $|V(G_1)|=|V(G_2)|$, the subgraph $T_2$ contains a cycle $C$. Let $e$ be an edge in $E(T_1)$. Then $T_1/e$ is acyclic and $T_2/e$ is not,  and so $G_1/e$ and $G_2/e$ are not $2$-isomorphic; a contradiction.
 \end{proof}
\end{lem}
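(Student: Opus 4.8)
The plan is to reduce the statement to Whitney's $2$-Isomorphism Theorem (Theorem \ref{Whitney}). Since $E(G_1)=E(G_2)$, proving that $G_1$ and $G_2$ are $2$-isomorphic amounts to showing that their graphic matroids coincide on this common ground set, that is, $M(G_1)=M(G_2)$. As a matroid is determined by its bases, and since $G_1$ and $G_2$ are connected on the same number $n:=|V(G_1)|=|V(G_2)|$ of vertices, each graphic matroid has rank $n-1$ and its bases are exactly the spanning trees. Hence it suffices to prove that $G_1$ and $G_2$ have the same spanning trees. Because the hypotheses are symmetric in $G_1$ and $G_2$, I only need to establish one inclusion: every spanning tree of $G_1$ is a spanning tree of $G_2$.

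So let $T_1$ be a spanning tree of $G_1$ and consider the subgraph $T_2:=G_2|E(T_1)$. The first step is a counting observation: $T_1$ has exactly $n-1$ edges, so $T_2$ has $n-1$ edges while $G_2$ has $n$ vertices. If $T_2$ were acyclic it would be a forest with $n-1$ edges, and such a forest has at least $n$ vertices; being a subgraph of $G_2$ it then has exactly $n$ vertices and is a single tree, hence a spanning tree. Therefore, if $T_2$ fails to be a spanning tree of $G_2$, it must contain a cycle $C$, whose edge set satisfies $E(C)\subseteq E(T_1)$.

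The heart of the argument is then to contradict the hypothesis that $G_1/e$ and $G_2/e$ are $2$-isomorphic for every edge $e\in E(G_1)$. Fix any $e\in E(T_1)$. On one hand, contracting an edge of the tree $T_1$ leaves an acyclic subgraph, so $E(T_1)\setminus\{e\}$ is acyclic in $G_1/e$; note that no edge of $T_1$ can become a loop of $G_1/e$, since such a loop would come from an edge parallel to $e$ in $G_1$, which together with $e$ would form a digon inside the tree $T_1$. On the other hand, the cycle $C$ of $T_2$ persists as a dependent set in $G_2/e$: if $e\notin E(C)$ the edges of $C$ still contain a cycle after the contraction, while if $e\in E(C)$ then $C/e$ is a shorter cycle, or a loop when $C$ is a digon. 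Thus $E(T_1)\setminus\{e\}$ is acyclic in $G_1/e$ but not in $G_2/e$. Since $2$-isomorphic graphs on a common edge set have exactly the same cycles, this contradicts the assumption that $G_1/e$ and $G_2/e$ are $2$-isomorphic; hence $T_2$ is a spanning tree of $G_2$, completing the inclusion and, together with its symmetric counterpart, the proof.

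The step I expect to require the most care is the justification that $2$-isomorphism of $G_1/e$ and $G_2/e$ really forces the same family of acyclic edge sets on the common ground set $E(G_1)\setminus\{e\}$. This is precisely the identity-on-edges content of Theorem \ref{Whitney}: each Whitney switching, and each of the other permitted operations, preserves the edge sets of cycles, so $2$-isomorphic graphs have identical cycle matroids. A secondary point worth keeping straight is the symmetry reduction, since a single inclusion of base families does not by itself force equality of the two matroids; one genuinely needs both directions, which the interchangeable roles of $G_1$ and $G_2$ supply.
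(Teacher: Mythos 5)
Your proof is correct and takes essentially the same route as the paper's: reduce via Whitney's $2$-Isomorphism Theorem to showing that $G_1$ and $G_2$ have the same spanning trees, and, when the edge set of a spanning tree $T_1$ of $G_1$ contains a cycle of $G_2$, contract an edge of $T_1$ to contradict the $2$-isomorphism of $G_1/e$ and $G_2/e$. You merely spell out details the paper leaves implicit, such as the symmetry giving both inclusions, the counting argument producing the cycle, and the loop/digon cases under contraction.
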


\begin{lem}\label{loop-case}
Let $G_1$ be a $2$-edge-connected graph such that $L(G_1)$ contains at least two circuits. Let $G_2$ be a graph with $L(G_1)=L(G_2)$. Assume that $G_1$ has a link $e$ such that $e$ is a loop of $G_2$. Then $G_1$ and $G_2$ are $2$-isomorphic to the graphs pictured in Figure \ref{Figure 3}.

\begin{figure}[htbp]
\begin{center}
\includegraphics[page=2,height=6.5cm]{figure}
\caption{$n\geq3$}
\label{Figure 3}
\end{center}
\end{figure}

 \begin{proof}
Since $L(G_1)$ contains at least two circuits and $L(G_1)=L(G_2)$, the graph $G_2-\{e\}$ has cycles $C_1$ and $C_2$ such that $C_1\cup C_2$ is a circuit of $L(G_2)$. Since $e$ is a loop of $G_2$, for some integer $k\in \{2,3\}$ there is a partition $(P_1,P_2,\cdots,P_k)$ of $E(C_1\cup C_2)$ such that when $k=2$ the sets $P_1\cup \{e\}$ and $P_2\cup \{e\}$ are circuits of $L(G_1)$, and when $k=3$ the sets $P_1\cup P_2\cup \{e\}$, $P_2\cup P_3\cup \{e\}$ and $P_1\cup P_3\cup \{e\}$ are circuits of $L(G_1)$. Since $E(C_1\cup C_2)$ is also a circuit of $L(G_1)$ and $e$ is a link of $G_1$, it is easy to verify that $k=3$ (that is, $C_1\cup C_2$ is a theta-subgraph of $G_2$.) and  {\bf (1)}  $G_1|C_1\cup C_2\cup \{e\}$ is 2-isomorphic to graphs pictured in Figure \ref{Figure 4}. Hence, by the arbitrary choice of $C_1$ and $C_2$, {\bf (2)} no two cycles in $G_2$ have at most one common vertex; and {\bf (3)} each ear of a theta-subgraph of $G_2$ is a cycle in $G_1$ or a path connecting the end-vertices of $e$ in $G_1$.

\begin{figure}[htbp]
\begin{center}
\includegraphics[page=3,height=2.5cm]{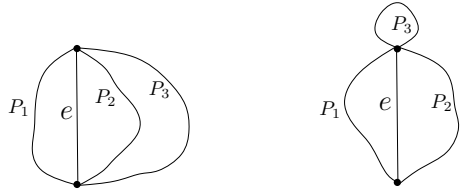}
\caption{Possible structures of graph $G_1|C_1\cup C_2\cup \{e\}$.}
\label{Figure 4}
\end{center}
\end{figure}

For each edge $f\in E(G_2)-(C_1\cup C_2\cup \{e\})$, there is a set $X$ with $f\in X\subseteq E(G_2)-(C_1\cup C_2\cup \{e\})$ such that $G_2|C_1\cup C_2\cup X$ is 2-edge-connected. By (2) $G_2|C_1\cup C_2\cup X$ is a subdivision of $K_4$ or $K_2^4$. (1) and (3) imply that $G_2|C_1\cup C_2\cup X$ is a subdivision of $K_2^4$. Repeating the process several times, we have that $G_2-\{e\}$ is a $K_2^n$-subdivision for some integer $n\geq3$. Hence, $G_1$ and $G_2$ are 2-isomorphic to the graphs pictured in Figure \ref{Figure 3}.
 \end{proof}
\end{lem}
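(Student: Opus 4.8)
The plan is to locate a ``seed'' circuit of $L(G_2)$ that avoids $e$, then exploit the fact that $e$ is a loop of $G_2$ to read off all circuits of the form $\{e\}\cup D$ with $D$ a cycle, and finally reinterpret these \emph{same} sets as circuits of $L(G_1)$, where $e$ is a link.

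First I would show $G_2-\{e\}$ has at least two cycles. Indeed, if it had at most one cycle $C$, then the only circuit of $L(G_2)$ would be $\{e\}\cup C$: since $e$ is a loop, any circuit must combine $e$ with the single other available cycle $C$, and $\{e\}\cup C$ is itself minimal. This contradicts the hypothesis that $L(G_1)=L(G_2)$ has at least two circuits. Hence I can fix a circuit $C_1\cup C_2$ of $L(G_2)$ lying inside $G_2-\{e\}$.

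The decisive local step comes next. Because $e$ is a loop of $G_2$, for every cycle $D$ contained in $C_1\cup C_2$ the set $\{e\}\cup D$ is a circuit of $L(G_2)=L(G_1)$. Reading this in $G_1$, where $e$ is a link: for $\{e\}\cup D$ to be a circuit with $e$ a link and $D$ a single cycle, \emph{both} end-vertices of $e$ must lie on $D$, since otherwise $\{e\}\cup D$ contains only the one cycle $D$; thus $\{e\}\cup D$ is a theta-subgraph of $G_1$ with $e$ a chord of $D$. If $C_1\cup C_2$ were two cycles meeting in at most one vertex, then taking $D=C_1$ and $D=C_2$ would force both ends of $e$ onto each of $C_1$ and $C_2$, which is impossible as they share at most one vertex while $e$ has two distinct ends. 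Therefore $C_1\cup C_2$ must be a theta-subgraph $P_1\cup P_2\cup P_3$ of $G_2$, and each of the four sets $P_1\cup P_2\cup P_3$ and $\{e\}\cup P_i\cup P_j$ is a circuit of $L(G_1)$. Since every triple among the four parts $P_1,P_2,P_3,e$ is a circuit, I would interpret each theta-circuit in $G_1$ to force $e$ to share its two ends with the common ends of the $P_i$, so that $G_1|(C_1\cup C_2\cup\{e\})$ is a subdivision of $K_2^4$; this is a short case analysis listing the candidate configurations and eliminating all but this one. It simultaneously yields the two global facts I will need: no two cycles of $G_2$ meet in at most one vertex, and every ear of a theta-subgraph of $G_2$ is either a cycle of $G_1$ or a path joining the ends of $e$ in $G_1$.

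Finally, the global assembly. For an arbitrary edge $f\notin C_1\cup C_2\cup\{e\}$, I would use 2-edge-connectivity to grow a 2-edge-connected subgraph of $G_2$ containing $C_1\cup C_2$ and $f$; the ``no two cycles meeting in at most one vertex'' property restricts such a subgraph to a subdivision of $K_4$ or of $K_2^4$, and the theta/ear constraint rules out $K_4$. Repeating and amalgamating over all edges forces $G_2-\{e\}$ to be a $K_2^n$-subdivision for some $n\ge 3$, which together with the loop $e$ and the matching link structure of $G_1$ gives exactly the graphs of Figure \ref{Figure 3}. I expect the main obstacle to be the third step: the case analysis certifying that the local picture is precisely a $K_2^4$-subdivision rather than a $K_4$-subdivision, and the bookkeeping ensuring that these local pieces glue into a single generalized theta graph globally rather than into some larger 2-edge-connected graph.
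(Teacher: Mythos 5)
Your proposal follows the paper's skeleton exactly (seed circuit in $G_2-\{e\}$, local analysis of $C_1\cup C_2\cup\{e\}$, the two global facts, then growing $2$-edge-connected subgraphs to force a $K_2^n$-subdivision), but the decisive local step as you argue it contains a genuine error: you repeatedly transfer graph structure between $G_2$ and $G_1$, when only the matroid transfers. Concretely, from ``$\{e\}\cup D$ is a circuit of $L(G_1)$ with $e$ a link'' you conclude that both ends of $e$ lie on $D$, ``since otherwise $\{e\}\cup D$ contains only the one cycle $D$'' --- but $D$ is a cycle of $G_2$, not necessarily of $G_1$. A circuit of the lift matroid may also be a pair of edge-disjoint cycles of $G_1$ sharing at most one vertex, with $e$ lying in one of them; then $G_1|D$ is a path plus a cycle, and neither end of $e$ need lie on any single cycle carrying all of $D$. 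Likewise, your refutation of the handcuff case (``impossible as they share at most one vertex while $e$ has two distinct ends'') uses the vertex intersection of $C_1$ and $C_2$ \emph{in $G_2$} to contradict a configuration \emph{in $G_1$}; vertex incidences do not carry over. The correct elimination is matroidal: in each of the possible $G_1$-readings of the circuits $\{e\}\cup C_1$ and $\{e\}\cup C_2$ one exhibits two cycles of $G_1$ inside a \emph{proper} subset of $C_1\cup C_2$ (for instance, if $\{e\}\cup C_i=A_i\sqcup B_i$ with $e\in A_i$, then $B_1\cup B_2$ is dependent and misses $A_1\setminus\{e\}\neq\emptyset$; if both are thetas, then $C_1$ and $C_2$ are $G_1$-cycles through both ends of $e$ and $C_1$ together with one arc of $C_2$ is a dependent proper subset), contradicting that $C_1\cup C_2$ is itself a circuit of $L(G_1)$.

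The configurations your dichotomy misses are not vacuous, and this is why your stated conclusion that $G_1|(C_1\cup C_2\cup\{e\})$ ``is a subdivision of $K_2^4$'' is false: an ear $P_1$ of the $G_2$-theta may instead be a cycle of $G_1$ meeting the theta $\{e\}\cup P_2\cup P_3$ in at most one vertex, and one checks that this graph has exactly the required circuits $P_i\cup P_j\cup P_k$ and $\{e\}\cup P_i\cup P_j$. This is precisely why the paper asserts only that the local graph is $2$-isomorphic to one of \emph{several} graphs (Figure \ref{Figure 4}) and why its fact (3) reads ``a cycle in $G_1$ \emph{or} a path connecting the end-vertices of $e$.'' Note that you state fact (3) in this same correct two-alternative form, which contradicts your own claim that the case analysis ``eliminates all but'' the $K_2^4$ picture --- the cycle alternative must survive, and it propagates into the final answer (the pairs in Figure \ref{Figure 3} are only determined up to $2$-isomorphism for this reason). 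Your opening step (at most one cycle in $G_2-\{e\}$ would leave $L(G_2)$ with a single circuit) and your global assembly paragraph are sound and match the paper; it is the local case analysis, which the paper compresses into ``it is easy to verify,'' where your articulated reasoning would fail and needs the minimality-based argument above.
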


By Lemma \ref{loop-case}, to prove Theorem \ref{1-con} we only need to consider the case that an edge is a link in $G_1$ if and only if it is a link in $G_2$.

\begin{lem}\label{ear-to-ear}
Let $G_1$ and $G_2$ be connected and $2$-edge-connected graphs with $L(G_1)=L(G_2)$ such that $L(G_1)$ has at least two circuits and such that each series class of $G_i$ is an ear of  $G_i$ for each $i\in\{1,2\}$. Then a set of edges is the edge set of an ear of $G_1$ if and only if it is the edge set of an ear of $G_2$.
 \begin{proof}
Assume otherwise. Without loss of generality assume that $e$ and $f$ are contained in some ear of $G_1$, but not in the some ear of $G_2$.  Evidently, $e$ is not in any cycle of $G_1-\{f\}$ and $L(G_1-\{f\})$ has a circuit as $L(G_1)$ has at least two circuits. Moreover, since $L(G_1-\{f\})=L(G_2-\{f\})$, the edge $e$ is a coloop of $G_2-\{f\}$; so $\{e,f\}$ is  a bond of $G_2$. Then $e$ and $f$ are contained in the some ear of $G_2$ as each series class of $G_2$ is an ear of $G_2$, a contradiction.
\end{proof}
\end{lem}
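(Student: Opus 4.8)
The plan is to give a description of ``lying in a common ear'' that refers only to the matroid $L$ and to deletions, so that the identity $L(G_1)=L(G_2)$ forces the ears to agree. First I would note that, under the standing hypothesis, the ears of $G_i$ are precisely its series classes: two consecutive edges of an ear meet at a degree-two vertex, so every cycle through one uses the other, whence each ear is contained in a series class, and the hypothesis supplies the reverse containment. Since the series relation is transitive (``every cycle meets both or neither''), the series classes, and hence the ears, partition $E(G_i)$. It therefore suffices to show that two edges $e,f$ lie in a common ear of $G_1$ exactly when they lie in a common ear of $G_2$; if the two partitions of the common edge set differed, some pair would be together in one and apart in the other, and by the symmetry between $G_1$ and $G_2$ I may assume $e,f$ lie in a common ear of $G_1$ but in distinct ears of $G_2$, aiming for a contradiction.

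Next I would pass from this graph statement to a matroid statement. Because $e$ and $f$ lie in a common ear of $G_1$, the pair $\{e,f\}$ is in series, so every cycle of $G_1$ through $e$ also uses $f$; deleting $f$ therefore destroys every cycle through $e$, i.e.\ $e$ is a bridge of $G_1\backslash f$ and so a coloop of $L(G_1\backslash f)$. By Lemma \ref{contract-L}(1) this matroid equals $L(G_1)\backslash f=L(G_2)\backslash f=L(G_2\backslash f)$, and being a coloop is a matroid property, so $e$ is a coloop of $L(G_2\backslash f)$ as well. The main obstacle I anticipate is that a coloop of a lift matroid need not come from a bridge: an edge can fail to lie in any circuit of $L(H)$ either because it lies in no cycle of $H$ or simply because $H$ is too sparse for $L(H)$ to possess any circuit at all. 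To rule out the degenerate possibility I would first check that $L(G_1\backslash f)$ still has a circuit, which follows from the hypothesis that $L(G_1)$ has at least two circuits together with circuit elimination: two circuits both using $f$ produce a third avoiding $f$, and any circuit avoiding $f$ survives the deletion.

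Finally I would exploit this surviving circuit. For a connected graph $H$ whose lift matroid $L(H)$ has a circuit, every edge lying in a cycle of $H$ can be combined with a second independent cycle to yield a circuit of $L(H)$ through that edge; hence a coloop of $L(H)$ must in fact be a bridge of $H$. Applying this to $H=G_2\backslash f$, which is connected because $G_2$ is $2$-edge-connected and whose lift matroid has a circuit, the coloop $e$ is forced to be a bridge of $G_2\backslash f$. Since $G_2$ is $2$-edge-connected, both $G_2\backslash e$ and $G_2\backslash f$ are connected while $G_2\backslash\{e,f\}$ is not, so $\{e,f\}$ is a minimal edge cut, i.e.\ a bond, of $G_2$. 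Thus $e$ and $f$ form a series pair and lie in a common series class, which by hypothesis is an ear of $G_2$, contradicting the assumption that they lie in distinct ears. This contradiction would complete the proof.
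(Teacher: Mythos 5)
Your proposal is correct and follows essentially the same route as the paper's proof: delete $f$, observe that $e$ is in no circuit of $L(G_1\backslash f)=L(G_2\backslash f)$ while that matroid still has a circuit, conclude $e$ is a cut-edge of $G_2\backslash f$, hence $\{e,f\}$ is a bond of $G_2$ and a series pair, contradicting the ear hypothesis. You merely make explicit two steps the paper leaves implicit (circuit elimination to produce a circuit avoiding $f$, and the fact that in a connected graph whose lift matroid has a circuit, every coloop of the lift matroid is a bridge), which is fine.
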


By possibly applying a sequence of Whitney's switching we can assume that each series class in a graph $G$ is an ear of $G$. Furthermore, by Lemma \ref{ear-to-ear} we can further assume that a set of edges is the edge set of an ear of $G_1$ if and only if it is the edge set of an ear of $G_2$. Hence, we only need consider cosimple graphs, where a graph is {\sl cosimple} if it has no cut-edges or non-trivial series classes.

%\begin{lem}\label{n=2}
%Let $G_1$ and $G_2$ be cosimple graphs with two vertices and $L(G_1)=L(G_2)$ . Assume that $L(G_1)$ contains at least two circuits. Then exactly one of the following holds.
% \begin{itemize}
 %   \item[(1)] $G_1$ and $G_2$ are $2$-isomorphic.
%    \item[(2)] $G_1$ and $G_2$ are isomorphic to the graphs pictured as Figure \ref{Figure 5}.
% \end{itemize}
% \end{lem}

Let $loop(G)$ be the set consisting of loops of $G$.

 %\begin{lem}\label{n=3}
%Let $G_1$ and $G_2$ be cosimple graphs with three vertices and $L(G_1)=L(G_2)$. Assume that $L(G_1)$ contains at least two circuits. Then $loop(G_1)=loop(G_2)$ and  exactly one of the following holds.
%\begin{itemize}
 %\item[(1)] $G_1-loop(G_1)$ and $G_2-loop(G_2)$ are isomorphic.
%\item[(2)] $G_1$ and $G_2$ are isomorphic to $K_3^{m,2,n}$ for some integers $m\in\{1,2\}$ and $n\geq2$. Moreover, when $n\geq3$ the $n$ parallel edges in  $G_1$ is also the $n$ parallel edges in $G_2$.
 %\end{itemize}
%\end{lem}

 \begin{figure}[htbp]
\begin{center}
\includegraphics[page=4,height=7cm]{figure}
\caption{$n\geq3$.}
\label{Figure 5}
\end{center}
\end{figure}

\begin{lem}\label{n<4}
Let $G_1$ and $G_2$ be cosimple $2$-edge-connected graphs with $2\leq |V(G_1)|=|V(G_2)|\leq 3$. Assume that $L(G_1)=L(G_2)$ and $L(G_1)$ contains at least two circuits. Then exactly one of the following holds.
 \begin{itemize}
   \item[(1)] $G_1$ and $G_2$ are $2$-isomorphic.
   \item[(2)] $|V(G_1)|=2$, the graphs $G_1$ and $G_2$ are isomorphic to the graphs pictured in Figure \ref{Figure 5}.
    \item[(3)] $G_1$ and $G_2$ are $2$-isomorphic to $K_3^{m,2,n}$ for some integers $m\in\{1,2\}$ and $n\geq2$, moreover, the $n$ parallel edges in $G_1$ are also the $n$ parallel edges in $G_2$ when $n\geq3$.
 \end{itemize}
 \end{lem}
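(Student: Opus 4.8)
The plan is to read the graph structure off the circuits of $L:=L(G_1)=L(G_2)$, using that the common ground set gives $E(G_1)=E(G_2)$ and that $|V(G_1)|=|V(G_2)|$ is part of the hypothesis, and to treat $|V|=2$ and $|V|=3$ separately (taking $G_1,G_2$ connected, as assumed in this part of the paper). Two features of $L$ drive everything. First, a pair $\{e,f\}$ is a circuit of $L$ of size two exactly when $e$ and $f$ are both loops, so the number of size-two circuits equals $\binom{p}{2}$, where $p$ is the number of loops, and when $p\ge2$ the loops are precisely the elements lying in a size-two circuit. Second, in a loopless graph every circuit of size three is a theta, that is, three parallel links, so a parallel class of size at least three is recognizable as a maximal set all of whose triples are circuits. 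Cosimplicity is used to force every parallel class of links to have size $0$ or at least $3$ when $|V|=2$, and to force at most one ``side'' of size one when $|V|=3$.

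For $|V|=2$ every edge is a loop or a link joining the two vertices, and counting size-two circuits gives $\binom{p_1}{2}=\binom{p_2}{2}$, hence $p_1=p_2$ or $\{p_1,p_2\}=\{0,1\}$. If $\max(p_1,p_2)\ge2$, then $p_1=p_2\ge2$, the loops are detected as the elements in size-two circuits and so agree as sets, the numbers of links agree, and since the placement of loops among the two vertices is a $2$-isomorphism operation we reach conclusion (1). Otherwise $p_1,p_2\le1$, so each $G_i$ is either $K_2^{|E|}$ or $K_2^{|E|-1}$ together with a single loop; in every case $L=U_{2,|E|}$ and the hypotheses force $|E|\ge4$, so each $G_i$ is one of the graphs of Figure \ref{Figure 5}, and we are in conclusion (1) or (2) according to whether the at-most-one loop sits on the same edge in $G_1$ and $G_2$.

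For $|V|=3$ I would first use the loop count. If $p_1=p_2\ge2$, the loops are detected as the elements in size-two circuits, so they agree in $G_1$ and $G_2$, and Corollary \ref{loop-iso} gives conclusion (1). The decisive case is $p_1=p_2=0$, which I would handle by contraction: for each link $e$, Lemma \ref{contract-L}(3) gives $L(G_1/e)=L(G_1)/e=L(G_2)/e=L(G_2/e)$, and $G_1/e,G_2/e$ are graphs on two vertices, so the loop-count dichotomy from the two-vertex case (which does not need cosimplicity) shows that $G_1/e$ and $G_2/e$ are $2$-isomorphic unless, after contraction, one acquires a loop the other lacks. If they are $2$-isomorphic for every $e$, then Lemma \ref{contract-G} applies and yields conclusion (1). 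Otherwise some contraction produces a loop in one graph but not the other, which means the contracted edge $e$ is parallel to some edge in one of $G_1,G_2$ but lies in no parallel pair of the other. Combining this asymmetry across all such edges with the theta-detectability of every parallel class of size at least three forces the common shape to be $K_3^{m,2,n}$ with $m\in\{1,2\}$ and $n\ge2$, and shows that when $n\ge3$ the size-$n$ side, being a detected theta, consists of the same edges in both graphs; this is conclusion (3). The residual cases $p_1=p_2=1$ and $\{p_1,p_2\}=\{0,1\}$ on three vertices I would settle by using that a loop forms a small circuit with every independent cycle, including a vertex-disjoint one, unlike a link, so that even a single loop is detected and matched, returning us to Corollary \ref{loop-iso}.

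The step I expect to be the main obstacle is the exceptional branch of the loopless three-vertex case: extracting exactly the family $K_3^{m,2,n}$ with $m\le2$ (and no larger family), and verifying the edge-level claim that the $n$ parallel edges agree when $n\ge3$. Here the lift matroid genuinely cannot distinguish a parallel class of size two from a singleton whose partner has been reassigned to another side, so the flexibility is real and the bookkeeping of which specific edges are parallel in which graph, and of how the two small sides may be permuted while the large side stays fixed, must be carried out carefully. A secondary difficulty is proving that a single loop is still forced to match when $|V|=3$, in contrast with the genuine ambiguity of one loop versus one link when $|V|=2$.
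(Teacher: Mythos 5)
Your $|V|=2$ analysis and the many-loops branch at $|V|=3$ are sound (counting size-two circuits to align loop sets is a legitimate substitute for the paper's invocation of Lemma \ref{loop-case}), but the proposal leaves the heart of the lemma unproven. In the decisive loopless three-vertex case you only assert that the contraction asymmetry ``forces the common shape to be $K_3^{m,2,n}$,'' and you flag this yourself as the main obstacle; that assertion is precisely the content of conclusion (3), so as written there is a genuine gap. The missing idea --- and the paper's actual route, which needs neither contraction nor Lemma \ref{contract-G} in this lemma --- is that once $loop(G_1)=loop(G_2)$, every parallel class of size at least three is visible in the matroid: each of its $3$-subsets is a circuit, while conversely a $3$-element circuit consisting of non-loops must be three mutually parallel links (every other $3$-element circuit of a lift matroid uses a loop), and maximality transfers by the same argument; hence any such class of $G_1$ is also a parallel class of $G_2$. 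Cosimplicity then finishes by pure side-counting on the triangle: if at most one parallel class has size at least three, then no side can be empty (an empty side forces a cut-edge, a series pair, or two sides of size at least three) and at most one side is a singleton (two singleton sides share a vertex and form a series pair), so the side sizes are $(m,2,n)$ with $m\in\{1,2\}$ and $n\geq2$; if instead two classes have size at least three, both are common to $G_1$ and $G_2$ and, with loops agreeing, conclusion (1) holds. For $n\geq3$ the size-$n$ class is a detected class and hence common, which is exactly the edge-level claim in (3). Your contraction detour, even if completed, would still have to reproduce this counting, so it buys nothing here.

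Your proposed fix for the residual single-loop cases also rests on a false principle: ``a loop forms a small circuit with every independent cycle, including a vertex-disjoint one, unlike a link'' is not a matroidal criterion --- whether the two cycles of a lift-matroid circuit share zero or one vertices is not recorded in $L$, and the Figure \ref{Figure 5} pair shows a single loop is in general invisible, since $L(K_2^{n+1})$ and $L(K_2^{n}\cup\{\text{loop}\})$ are both $U_{2,n+1}$ on the same ground set; any detector phrased purely in terms of $L$ must therefore fail somewhere. What saves the three-vertex case is structural, not local: by Lemma \ref{loop-case}, a link/loop mismatch on edge $e$ forces $G_2-\{e\}$ to be a $K_2^n$-subdivision, and on three vertices such a subdivision has a length-two path whose two edges form a series pair of $G_2$ (the loop, wherever it sits, lies in no cycle through either edge), contradicting cosimplicity; on two vertices the same lemma lands exactly in Figure \ref{Figure 5}. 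So both of your flagged difficulties should be routed through Lemma \ref{loop-case}, which the paper applies at the outset to assume $loop(G_1)=loop(G_2)$ once and for all.
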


 \begin{proof}
By Lemma \ref{loop-case} we may assume that $loop(G_1)=loop(G_2)$. Then the lemma holds when $|V(G_1)|=2$. So assume that  $|V(G_1)|=3$. Since $loop(G_1)=loop(G_2)$, each non-trivial parallel class of $G_1$ with at least three edges must be also a non-trivial parallel class of $G_2$. Hence, when $G_1$ has two parallel classes with at least three edges, (1) holds. So we may assume that $G_1$ has at most one parallel class with at least three edges. On the other hand, since $G_1$ and $G_2$ are cosimple, $G_1$ and $G_2$ have three parallel classes and at least two of them are non-trivial. Hence, when $G_1$ has no loops, (3) obviously holds; when $G_1$ has a loop, since $loop(G_1)=loop(G_2)$, Corollary \ref{loop-iso} implies that $G_1$ and $G_2$ are $2$-isomorphic, that is, (1) holds. 
%each parallel pair in $G_1$ is also a parallel pair in $G_2$ as  $loop(G_1)=loop(G_2)$, and so $G_1-loop(G_1)$ and $G_2-loop(G_2)$ are isomorphic. Then (1) holds.
%Since $G_1$ has no loops, Lemma \ref{loop-case} implies that $G_2$ has no loops. Then each non-trivial parallel class of $G_1$ with at least three edges must be also a non-trivial parallel class of $G_2$. Hence, when $G_1$ has two non-trivial parallel class with at least three edges, (1) holds. So the lemma  holds.
 \end{proof}

The {\sl star} of a vertex $v$ in a graph $G$, denoted by $st_G(v)$, is the set of edges of $G$ incident with $v$.

\begin{lem}\label{n=4}
Let $G_1$ and $G_2$ be $2$-edge-connected cosimple graphs with exactly four vertices and without loops. Assume that $L(G_1)=L(G_2)$ and $L(G_1)$ has at least two circuits. Then  at least one of the following holds.
 \begin{itemize}
    \item[(1)] $G_1$ and $G_2$ are $2$-isomorphic;
    \item[(2)] $G_1$ and $G_2$ are isomorphic to $K_4$;
    \item[(3)] $G_1$ and $G_2$ are $2$-isomorphic to the graphs pictured in Figure \ref{Figure 6}.
 \end{itemize}
 \end{lem}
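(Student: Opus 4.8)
The plan is to argue against conclusion (1) and use contraction to drop into the three-vertex case already settled in Lemma \ref{n<4}. First I would record the structural consequence of the hypotheses: since $G_1,G_2$ are cosimple and $2$-edge-connected, every vertex has degree at least three (a degree-two vertex would create a non-trivial series pair), so on four vertices the underlying simple graphs $\si(G_i)$ can only be $K_4$, $K_4$ minus an edge, or the $4$-cycle $C_4$. This already confines where non-trivial parallel classes can sit. As in the preceding arguments I may assume that an edge is a link in $G_1$ if and only if it is a link in $G_2$ (Lemma \ref{loop-case} disposes of the remaining possibility), and there are no loops by hypothesis.

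Next comes the main reduction. For every link $e$, Lemma \ref{contract-L}(3) gives $L(G_i/e)=L(G_i)/e$, so $L(G_1/e)=L(G_2/e)$ on the common ground set $E(G_1)-e$, and each $G_i/e$ is a $2$-edge-connected graph on three vertices. If for every $e$ the graphs $G_1/e$ and $G_2/e$ turn out to be $2$-isomorphic, then Lemma \ref{contract-G} immediately yields that $G_1$ and $G_2$ are $2$-isomorphic, which is conclusion (1). So I would assume this fails and fix a link $e$ with $G_1/e$ and $G_2/e$ not $2$-isomorphic.

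To analyse such an $e$ I would feed $G_1/e$ and $G_2/e$ into Lemma \ref{n<4}, after first normalising them: deleting any loops created by contraction and passing to the cosimplification, keeping track of the correspondence via Corollary \ref{loop-iso} (when an element is a loop of both contractions) and Lemma \ref{loop-case} (when it is a loop of only one). Since the pair is not $2$-isomorphic, Lemma \ref{n<4} forces the contractions into its exceptional outcomes (2) or (3): up to $2$-isomorphism they are the two-vertex graphs of Figure \ref{Figure 5} (a $K_2^n$-type structure) or $K_3^{m,2,n}$ with $m\in\{1,2\}$ and $n\ge 2$. The final step is to \emph{lift} this information: reconstruct all four-vertex, $3$-edge-connected, loopless graphs whose contraction along $e$ produces the prescribed exceptional structure, and intersect these possibilities with the constraint $L(G_1)=L(G_2)$. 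The expectation is that only two families survive, namely $G_1\cong G_2\cong K_4$ (conclusion (2)) and the graphs of Figure \ref{Figure 6} (conclusion (3)).

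The main obstacle, I expect, is the bookkeeping in the lifting step rather than any single deep idea. Because $G_1$ and $G_2$ share an edge set but not a graph structure, the relation ``parallel to $e$'' differs between them, so contraction can turn different edges into loops in $G_1/e$ and in $G_2/e$; reconciling these via Corollary \ref{loop-iso} and Lemma \ref{loop-case}, and then checking the reconstruction against \emph{every} admissible choice of $e$ so that the exceptional three-vertex pictures are mutually consistent, is what should pin the graphs down to exactly $K_4$ and the Figure \ref{Figure 6} family and rule out any further sporadic four-vertex example.
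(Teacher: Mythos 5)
Your skeleton coincides with the paper's: contract each edge, invoke Lemma \ref{contract-G} when all contractions are pairwise $2$-isomorphic to get outcome (1), and otherwise fix a link $f$ with $G_1/f$, $G_2/f$ not $2$-isomorphic and classify the contractions by Lemma \ref{n<4}. (Two small corrections to that part: the contractions have exactly three vertices, so outcome (2) of Lemma \ref{n<4} --- the two-vertex Figure \ref{Figure 5} graphs --- cannot arise and should not enter your lifting; and no ``normalisation'' is needed, since contraction of a link in a cosimple graph is again cosimple, and Lemma \ref{n<4} tolerates loops. In fact loops cannot occur in the bad contraction at all: a common loop would make $G_1/f$ and $G_2/f$ $2$-isomorphic by Corollary \ref{loop-iso}, contradicting the choice of $f$.) So up to this point you reproduce the paper's argument.

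The genuine gap is that your final ``lifting'' step, which you defer as bookkeeping with an ``expectation,'' is where essentially the whole proof lives, and you supply no mechanism for it. The difficulty you yourself flag --- that $G_1$ and $G_2$ share an edge set but not an incidence structure --- is exactly the problem: $G_1/f$ and $G_2/f$ are both isomorphic to $K_3^{m,2,n}$ but by construction \emph{not} $2$-isomorphic, i.e.\ the isomorphisms put the labels in different places, so knowing the abstract shape of the contractions does not determine how $G_2$ sits over $G_2/f$. The paper bridges this with three concrete steps you are missing: (i) two edges are parallel in $G_1$ if and only if they are parallel in $G_2$ (its claim \ref{n=4}.1, proved by observing that otherwise contracting one of the pair makes a link of one contraction a loop of the other, contradicting Lemma \ref{loop-case}); (ii) any two non-trivial parallel classes of $G_1$ together with $f$ form a triangle after simplification (claim \ref{n=4}.3), which forces $G_1$ to be obtained from $G_1/f$ by splitting a \emph{single} vertex, collapsing your ``reconstruct all four-vertex preimages'' search; and (iii) in the case $G_1/f\cong K_3^{1,2,n}$ with a big star at the split vertex, explicit circuits of $L(G_1)$ must be exhibited and compared against $L(G_2)$ to pin down how the splitting of $G_2$ aligns with that of $G_1$ (forcing, e.g., $|P-P'|=1$, which is where the exceptional family of Figure \ref{Figure 6} and its parameter constraint come from). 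Without carrying out (i)--(iii), ``intersect with $L(G_1)=L(G_2)$'' is not an argument, and one cannot rule out further sporadic four-vertex pairs nor derive that the survivors are exactly $K_4$ and the Figure \ref{Figure 6} graphs.
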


\begin{figure}[htbp]
\begin{center}
\includegraphics[page=5,height=3.5cm]{figure}
\caption{$n\geq7$.}
\label{Figure 6}
\end{center}
\end{figure}

\begin{proof}
By Lemma \ref{contract-L} (3), for each edge $e\in E(G_1)$ we have $L(G_1/e)=L(G_2/e)$. If $G_1/e$ and $G_2/e$ are 2-isomorphic for each edge $e\in E(G_1)$, then Lemma \ref{contract-G} implies that $G_1$ and $G_2$ are 2-isomorphic. So we may assume that there is some edge $f\in E(G_1)$ such that $G_1/f$ and $G_2/f$ are not 2-isomorphic. Since $L(G_1)$ has at least two circuits, $L(G_1/f)$ also has at least two circuits. Moreover, since $G_1/f$ and $G_2/f$ are cosimple graphs with exactly three vertices, by Lemma \ref{n<4} we have that {\bf (a)} the graphs $G_1/f$ and $G_2/f$ are isomorphic to $K_3^{m,2,n}$ for some integers $m\in\{1,2\}$ and $n\geq2$;  moreover, when $n\geq3$ the $n$ parallel edges in  $G_1$ are also the $n$ parallel edges in $G_2$. \\

 {\noindent\bf \ref{n=4}.1.} {\sl Two edges are parallel in $G_1$ if and only if they are parallel in $G_2$. }

\begin{proof}[Subproof.]
If two edges are parallel in $G_1$ but not $G_2$, then contracting one of the edges produces a counterexample to Lemma \ref{loop-case}.
\end{proof}

The simple proof of \ref{n=4}.1 is given by the referee. Since no non-trivial parallel classes in $G_1$ or $G_2$ contains $f$ by (a),  \ref{n=4}.1 implies  \\

 {\noindent\bf \ref{n=4}.2.} {\sl Each $2$-edge path joining the end-vertices of a non-trivial parallel class of $G_1$ is also a $2$-edge path joining the end-vertices of the non-trivial parallel class of $G_2$. }\\

{\noindent\bf \ref{n=4}.3.} {\sl Let $P_1,P_2$ be non-trivial parallel classes of $G_1$. Then $\si(G_1|P_1\cup P_2\cup f)$ is a triangle. }

\begin{proof}[Subproof.]
Since $G_1/f$ has no loop, neither $P_1$ nor $P_2$ contains $f$. If $P_1$ and $P_2$ are not contained in a parallel class of $G_1/f$, then $P_1$ and $P_2$ are contained in two different non-trivial parallel classes of $G_1/f$. Moreover, since $P_1$ and $P_2$ are also non-trivial parallel classes of $G_2$ by \ref{n=4}.1, by (a) we have that $G_1/f$ and $G_2/f$ are isomorphic, a contradiction. So $P_1$ and $P_2$ are contained in a parallel class of $G_1/f$. Then $\si(G_1|P_1\cup P_2\cup f)$ is a triangle.
\end{proof}

First we consider the case that $G_1/f$ is isomorphic to $K_3^{2,2,n}$. By \ref{n=4}.3, $G_1$ is obtained from $G_1/f$ by splitting a degree-4 vertex. Since $G_1$ is cosimple, $G_1$ is isomorphic to the graph pictured in Figure \ref{Figure 6} with $e_5$ relabelled by $f$. Let $P$ be the unique non-trivial parallel class of $G_1$ with $n$ edges. Since $P$ is a also non-trivial parallel class of $G_2$ by \ref{n=4}.1 and the fact that $G_2/f$ is isomorphic to $K_3^{2,2,n}$, the graph $G_2$ is isomorphic to the graph pictured in Figure \ref{Figure 6} with $e_5$ relabelled by $f$. So (3) holds.

Secondly we consider the case that $G_1/f$ is isomorphic to $K_3^{1,2,n}$.  Let $e_i$ be the edge of $G_i/f$ that is not in a parallel class for $1\leq i\leq 2$. Evidently, when $n\geq3$, since $G_1/f$ and $G_2/f$ are not 2-isomorphic, $e_1\neq e_2$. Since each vertex of $G_1$ has degree at least three, by \ref{n=4}.3 the graph $G_1$ is obtained from $G_1/f$ by splitting the vertex $v$ incident with two non-trivial parallel classes.  When $|st_{G_1/f}(v)|=4$, since $G_1$ is cosimple $G_1$ is isomorphic to $K_4$. By symmetry $G_2$ is also isomorphic to $K_4$. So (2) holds.

Assume that $|st_{G_1/f}(v)|\geq5$, that is, a non-trivial parallel class $P$ incident with $v$ in $G_1/f$ has at least three edges. Then some proper subset $P'$ of $P$ is a non-trivial parallel class in $G_1$ as $G_1$ is cosmiple. Let $\{f_1,f_2\}$ be the 2-edge parallel class in $G_1/f$. Since $\{f, f_1,f_2\}$ is a cycle in $G_1$ and $\{e_1, f_1,f_2\}$ is the neighbourhood of a degree-3 vertex in $G_1/f$ and $G_1$, by symmetry we may assume that $e_1,f_1$ is a 2-edge path joining the end-vertices of $P'$ in $G_1$ and $f_2$ is not incident with $P'$. On the other hand, by symmetry, $e_2$ is also contained in a 2-edge path joining the end-vertices of $P'$ in $G_2$. So $f_1=e_2$ as $e_2\in\{f_1,f_2\}$, consequently, $|P-P'|=1$, for otherwise there are two such $P'$, which is not possible. Therefore, (3) holds.
\end{proof}

\begin{lem}\label{n=5}
Let $G_1$ and $G_2$ be $2$-edge-connected cosimple graphs with five vertices and without loops. Assume that $L(G_1)=L(G_2)$ and $L(G_1)$ has at least two circuits. Then $G_1$ and $G_2$ are $2$-isomorphic.
 \begin{proof}
By Lemma \ref{contract-L} (3), for each edge $e\in E(G_1)$ we have $L(G_1/e)=L(G_2/e)$. If $G_1/e$ and $G_2/e$ are 2-isomorphic for each edge $e\in E(G_1)$, then Lemma \ref{contract-G} implies that $G_1$ and $G_2$ are 2-isomorphic. So we may assume that for some edge $f\in E(G_1)$ we have $L(G_1/f)=L(G_2/f)$ but $G_1/f$ and $G_2/f$ are not 2-isomorphic.

We claim that $G_1/f$ and $G_2/f$ have no loops. Since $L(G_1/f)$ has at least two circuits, Lemma \ref{loop-case} implies that $loop(G_1/f)=loop(G_2/f)$. If $loop(G_1/f)\neq\emptyset$, then Corollary \ref{loop-iso} implies that $G_1/f$ and $G_2/f$ are 2-isomorphic, a contradiction. %So $loop(G_1/f)=\emptyset$. 

%Let $e$ be a loop of $G_1/f$ and $G_2/f$. Since  $M(G_1/f\backslash e)=L(G_1/f)/e=L(G_2/f)/e=M(G_2/f\backslash e)$ by Lemma \ref{contract-L} (2),  $G_1/f\backslash e$ and $G_2/f\backslash e$ are 2-isomorphic by Whitney 2-Isomorphism Theorem. So $G_1/f$ and $G_2/f$ are 2-isomorphic, a contradiction. 

Since $G_1/f$ and $G_2/f$ are cosimple with four vertices and without loops, Lemma \ref{n=4} implies that $G_1/f$ and $G_2/f$ are either 2-isomorphic to $K_4$ or to the graphs pictured in Figure \ref{Figure 6}. Since each vertex in $K_4$ has degree three and $G_1$ and $G_2$ are cosimple, neither $G_1/f$ nor $G_2/f$ is 2-isomorphic to $K_4$. So $G_1/f$ and $G_2/f$ are 2-isomorphic to the graphs pictured in Figure \ref{Figure 6} with $G_i$ replaced by $G_i/f$ and all other labeling the same.  Let $P$ be the non-trivial parallel class in $G_1/f$ and $G_2/f$. For each $i\in \{1,2\}$, let $u_i$ and $v_i$ be the end-vertices of $f$ in $G_i$, let $x_i$ be the vertex of degree at least four in $G_i/f$ incident with $e_1$, and $y_i$ be the vertex of degree at least four in $G_i/f$ incident with $e_3$.  Since $|st_{G_i}(u_i)|, |st_{G_i}(v_i)|\geq 3$, the graph $G_i$ is obtained from $G_i/f$ by splitting $x_i$ or $y_i$. Without loss of generality we may assume that $G_i$ is obtained from $G_i/f$ by splitting $x_i$ for each $i\in \{1,2\}$.

We claim that $|E(G_1/f)|=7$, that is, $|P|=2$. Assume otherwise. Then there is a subset $P'$ of $P$ with $|P'|\geq 2$ such that $P'$ is also a parallel class in $G_1$. Using a similar analysis to the one in the proof of  \ref{n=4}.1 we have that $P'$ is also a parallel class in $G_2$.  Assume that $e_1,e_2$ are adjacent in $G_1$. Since a union of  any two edges in $P'$ and $\{e_1,e_2,e_5\}$ or $\{e_3,e_4,e_5\}$ is a circuit of $L(G_1)$, we deduce that $\{e_1,e_4,f\}\cup P'$ are contained in $st_{G_2}(u_2)$ or $st_{G_2}(v_2)$. Hence, $|P-P'|\leq1$, implying that $(P-P')\cup\{f\}$ is a bond of $G_2$ with at most two edges, a contradiction as $G_2$ is cosimple. So $e_1,e_2$ are not adjacent in $G_1$. By symmetry we may assume that $st_{G_1}(v_1)=\{e_2,f\}\cup P'$. Since $P'$ is a parallel class of $G_2$ and the union of $\{e_3,e_4,e_5\}$ and any two edges in $P'$ is a circuit of $L(G_1)$, by symmetry we may assume that $\{e_4,f\}\cup P'$ are incident with $v_2$. Hence, $|P-P'|=1$ and $st_{G_1}(u_1)=st_{G_2}(u_2)=(P-P')\cup \{e_1,f\}$. Set $\{e_6\}=P-P'$.  See Figure \ref{Figure 7}. Then $\{e_1,e_2,e_3,e_5,e_6,f\}$ is a circuit of $L(G_1)$ but is not a circuit of $L(G_2)$, a contradiction. So $|E(G_1/f)|=7$. Set $E(G_1/f):=\{e_1,e_2,\cdots,e_7\}$.

Since $G_1$ and $G_2$ are cosimple and $|E(G_1/f)|=7$, we have $|st_{G_i}(u_i)|=|st_{G_i}(v_i)|=3$ for each $i\in \{1,2\}$. By symmetry, there are two cases to consider. First we consider the case $st_{G_1}(u_1)=\{f,e_1,e_2\}$. Since $\{e_1,e_2,e_3,e_4,e_5\}$ is a circuit of $L(G_1)$, by symmetry we can assume $st_{G_2}(u_2)=\{f,e_1,e_4\}$. Then $\{e_2,e_3,e_5,e_6,e_7,f\}$ is a circuit of $L(G_1)$ but is not a circuit of $L(G_2)$, a contradiction.

Secondly consider the case $st_{G_1}(u_1)=\{f,e_1,e_6\}$. Then $\{e_1,e_3,e_4,e_5,e_6\}$ is a circuit of $L(G_1)$. On the other hand, by symmetry and the analysis in the last paragraph we have $\{f,e_1,e_4\}\neq \{N_{G_2}(u_2),N_{G_2}(v_2)\}$. So $\{e_1,e_3,e_4,e_5,e_6\}$ is not a circuit of $L(G_2)$, a contradiction.
 \end{proof}
 \end{lem}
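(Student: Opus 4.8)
The plan is to run the same contraction--induction that drove Lemma~\ref{n=4}, but now the goal is to show that the ``exceptional'' branch never occurs, so every line of the argument must terminate in a contradiction rather than in a new family of graphs. First I would use Lemma~\ref{contract-L}(3) to note that $L(G_1/e)=L(G_2/e)$ for every link $e\in E(G_1)$. If $G_1/e$ and $G_2/e$ were $2$-isomorphic for every $e$, then Lemma~\ref{contract-G} would immediately give that $G_1$ and $G_2$ are $2$-isomorphic and we would be done; so I would fix an edge $f$ with $G_1/f$ and $G_2/f$ \emph{not} $2$-isomorphic and work towards a contradiction.

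Next I would pin down the structure of the contractions $G_1/f$ and $G_2/f$, which are cosimple graphs on four vertices with $L(G_1/f)=L(G_2/f)$ and at least two circuits. Two cleanups are needed. First, loops can be ruled out: by Lemma~\ref{loop-case} we have $loop(G_1/f)=loop(G_2/f)$, and were this set nonempty Corollary~\ref{loop-iso} would make $G_1/f,G_2/f$ $2$-isomorphic, a contradiction; so both are loopless. Then Lemma~\ref{n=4} applies, and since $G_1/f,G_2/f$ are not $2$-isomorphic they must land in case (2) ($K_4$) or case (3) (Figure~\ref{Figure 6}). The $K_4$ case is excluded by a degree count: a cosimple graph on five vertices has minimum degree at least three, hence at least eight edges, whereas if $G_i/f$ were $K_4$ then $G_i$ would have only $6+1=7$ edges. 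Thus $G_1/f$ and $G_2/f$ are both $2$-isomorphic to the graph of Figure~\ref{Figure 6}, with its distinguished parallel class $P$ and its two degree-three vertices.

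I would then reconstruct $G_i$ as a vertex split of this common four-vertex graph. Because every vertex of $G_i$ has degree at least three, the split must occur at one of the two high-degree vertices (splitting a degree-three vertex of Figure~\ref{Figure 6} cannot leave both new ends of degree $\geq 3$), and by symmetry I may assume both splits are at the vertex incident with $e_1$. The crucial quantitative step is to show $|P|=2$, i.e.\ $|E(G_i/f)|=7$. Assuming $|P|\geq3$, some proper subset $P'\subseteq P$ with $|P'|\geq2$ survives as a parallel class of $G_1$; repeating the contraction argument behind~\ref{n=4}.1 (contract an edge and invoke Lemma~\ref{loop-case}) shows $P'$ is also a parallel class of $G_2$. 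Feeding pairs of edges of $P'$ into the two triangles $\{e_1,e_2,e_5\}$ and $\{e_3,e_4,e_5\}$ produces circuits of $L(G_1)=L(G_2)$ that force the edges of $\{e_1,e_4,f\}\cup P'$ (up to symmetric relabelling) to pile up in a single star of $G_2$; this squeezes $|P-P'|\leq1$ and makes $(P-P')\cup\{f\}$ a bond of size at most two, contradicting cosimplicity. Hence $G_i$ has exactly eight edges and degree sequence $(4,3,3,3,3)$, with $u_i,v_i$ (the ends of $f$) both of degree three.

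With the structure now rigid, the proof finishes by a short case analysis on how the four edges at the split vertex are distributed between $u_i$ and $v_i$; up to symmetry there are two cases, $st_{G_1}(u_1)=\{f,e_1,e_2\}$ and $st_{G_1}(u_1)=\{f,e_1,e_6\}$ with $e_6\in P$. In each case I would exhibit an explicit six-element set that is a circuit of $L(G_1)$ but, given the constraints already forced on how $G_2$ is split, cannot be a circuit of $L(G_2)$, yielding the final contradiction. I expect the main obstacle to be exactly this bookkeeping around the parallel class: establishing $|P|=2$ requires combining the ``parallel-is-preserved'' principle with careful tracking of which stars the forced circuits require edges to occupy, and the distinguishing circuits in the last step must be chosen so that the single remaining degree of freedom in the split of $G_2$ is genuinely detected by the lift matroid.
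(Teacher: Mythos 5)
Your proposal follows the paper's own proof essentially step for step: the same reduction via Lemma~\ref{contract-L}(3) and Lemma~\ref{contract-G} to a single bad edge $f$, the same elimination of loops (Lemma~\ref{loop-case} plus Corollary~\ref{loop-iso}) and of the $K_4$ outcome of Lemma~\ref{n=4} (your edge count $7<8$ is just a repackaging of the paper's degree-three/cosimplicity argument), the same identification of $G_i$ as a split of the Figure~\ref{Figure 6} graph at the high-degree vertex $x_i$, the same $|P|=2$ claim via preservation of parallel classes, and the same final case analysis killed by explicitly distinguishing circuits. The only wrinkle is that in the $|P|\geq3$ elimination the bond contradiction you describe handles only the sub-case where $e_1,e_2$ lie on the same side of the split --- the paper's other sub-case ends with $|P-P'|=1$ and is finished instead by the distinguishing circuit $\{e_1,e_2,e_3,e_5,e_6,f\}$ --- but this is precisely the bookkeeping you flagged, and it goes through as in the paper.
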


 \begin{figure}[htbp]
\begin{center}
\includegraphics[page=6,height=3.5cm]{figure}
\caption{}
\label{Figure 7}
\end{center}
\end{figure}

For convenience, Theorem \ref{1-con} is restated here.

\begin{thm}%\label{1-con}
Let $G_1$ be a $2$-edge-connected graph such that $L(G_1)$ contains at least two circuits. Let $G_2$ be a graph with $L(G_1 )=L(G_2 )$. Then at least one of the following holds.
\begin{itemize}
    \item[(1)] $G_1$ and $G_2$ are $2$-isomorphic.
    \item[(2)] $G_1$ and $G_2$ are $2$-isomorphic to subdivisions of $K_4$, where the edge set of an ear of $G_1$ is also the edge set of an ear of $G_2$.
    \item[(3)] $G_1$ and $G_2$ are $2$-isomorphic to subdivisions of $K_3^{m,2,n}$ for some $m\in\{1,2\}$ and $n\geq2$, where the edge set of an ear of $G_1$ is also the edge set of an ear of $G_2$. Moreover, when $n\geq3$, the $n$ ears in $G_1$ having the same ends also have the same ends in $G_2$. 
    \item[(4)] $G_1$ and $G_2$ are $2$-isomorphic to the graphs pictured in Figure \ref{Figure 1} .
\end{itemize}
\end{thm}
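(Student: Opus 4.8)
The plan is to reduce to the cosimple case and then induct on the number of vertices, with the already-established small-case lemmas serving as the base and with edge contraction plus Lemma~\ref{contract-G} as the inductive engine. First I would dispose of the loop mismatch: if some link of $G_1$ is a loop of $G_2$ (or vice versa), then Lemma~\ref{loop-case} puts us directly into conclusion (4), since the graphs of Figure~\ref{Figure 3} are among those of Figure~\ref{Figure 1}. So I may assume that links correspond to links, and hence loops to loops. Applying a sequence of Whitney switchings I may assume that each series class of $G_i$ is an ear, and then Lemma~\ref{ear-to-ear} gives that a set of edges is the edge set of an ear of $G_1$ exactly when it is the edge set of an ear of $G_2$. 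Thus each $G_i$ is a subdivision of a cosimple graph $H_i$, and choosing the distinguished edges of corresponding series classes consistently (say contracting the common set $X$), Lemma~\ref{contract-L}(3) yields $L(H_1)=L(G_1)/X=L(G_2)/X=L(H_2)$. The ear correspondence means it suffices to prove the theorem for the cosimple cores $H_1,H_2$: if they are $2$-isomorphic we obtain (1), while the exceptional cosimple outcomes $K_4$, $K_3^{m,2,n}$, and the graphs of Figures~\ref{Figure 5} and~\ref{Figure 6} subdivide, respecting the ear and ends conditions, to exactly conclusions (2), (3), and (4).

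For the cosimple pair $H_1,H_2$ I may further assume both are loopless: a loop of $H_1$ is a loop of $H_2$ (loops correspond to loops after the reduction), so Corollary~\ref{loop-iso} already gives that they are $2$-isomorphic. Since $H_1$ and $H_2$ are connected and contain cycles, the rank of $L(H_i)$ equals $|V(H_i)|$, whence $|V(H_1)|=|V(H_2)|=:n$. For $n\le 3$, $n=4$, and $n=5$ the desired conclusion is precisely Lemma~\ref{n<4}, Lemma~\ref{n=4}, and Lemma~\ref{n=5} respectively; in particular, for $n\ge 5$ the assertion is simply that $H_1$ and $H_2$ are $2$-isomorphic, which is what the induction must establish.

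For the inductive step ($n\ge 6$) I would consider, for each link $e$, the contractions $H_1/e$ and $H_2/e$, which satisfy $L(H_1/e)=L(H_2/e)$ by Lemma~\ref{contract-L}(3) and have $n-1\ge 5$ vertices. The key observation is that \emph{contracting a link in a cosimple graph produces a cosimple graph}: the cocircuits of $M(H_i/e)$ are exactly the cocircuits of $M(H_i)$ avoiding $e$, so $H_i/e$ has neither a coloop nor a $2$-element cocircuit, i.e.\ no cut-edge and no nontrivial series class. Hence $H_i/e$ is cosimple with $n-1\ge 5$ vertices, possibly carrying loops created by edges parallel to $e$. If $H_1/e$ is loopless, then so is $H_2/e$ (a loop would come from an edge parallel to $e$, and parallelism is preserved by the argument of \ref{n=4}.1), and the inductive hypothesis gives that $H_1/e$ and $H_2/e$ are $2$-isomorphic. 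If $H_1/e$ has a loop $g$, then $g$ is parallel to $e$ in $H_1$, hence in $H_2$, so $g$ is a common loop of $H_1/e$ and $H_2/e$, and Corollary~\ref{loop-iso} again gives that they are $2$-isomorphic. Since every contraction yields a $2$-isomorphic pair, Lemma~\ref{contract-G} delivers that $H_1$ and $H_2$ are $2$-isomorphic, completing the induction.

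I expect the main obstacle to be the bookkeeping of the reduction rather than a single hard inequality: one must check that passing to the cosimple cores preserves $L(H_1)=L(H_2)$ and, conversely, that the exceptional cosimple graphs subdivide back to exactly the stated graphs of (2)--(4) with the ``edge set of an ear'' and ``same ends'' conditions intact. The second delicate point is the observation, used throughout the inductive step, that contraction keeps us inside the cosimple, at-least-five-vertex regime, with the loops it creates absorbed by Corollary~\ref{loop-iso}. The genuinely graph-theoretic difficulty has been isolated into the small-case Lemmas~\ref{n<4}--\ref{n=5}, so once these are in hand the remaining task is the assembly above.
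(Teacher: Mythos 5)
Your proposal is correct and takes essentially the same route as the paper's own proof: the same initial reduction via Lemma~\ref{loop-case}, Whitney switchings and Lemma~\ref{ear-to-ear} to connected, loopless, cosimple graphs, the same base cases supplied by Lemmas~\ref{n<4}, \ref{n=4} and \ref{n=5}, and the same induction on $|V(G_1)|\geq 6$ driven by Lemma~\ref{contract-L}(3) together with Lemma~\ref{contract-G}. Your extra observations---that contraction of a link preserves cosimplicity (via cocircuits of $M/e$ being cocircuits of $M$ avoiding $e$) and that loops created by contraction are handled by the parallelism-transfer argument of \ref{n=4}.1 plus Corollary~\ref{loop-iso}---simply make explicit details that the paper's terser final proof leaves implicit.
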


\begin{proof}
 If some loop $e$ of $G_1$ is also a loop of $G_2$, then by Corollary \ref{loop-iso} we have that $G_1\backslash e$ and $G_2\backslash e$ are 2-isomorphic. So $G_1$ and $G_2$ are 2-isomorphic. Moreover, when some link of $G_1$ is a loop of $G_2$, Lemma \ref{loop-case} implies that (4) holds. Therefore, we may assume that neither $G_1$ nor $G_2$ has loops. By Whitney's 2-Isomorphism Theorem we can further assume that $G_1$ and $G_2$ are connected, and each series class of $G_i$ is an ear of $G_i$ for each $i\in \{1,2\}$. Using Lemma \ref{ear-to-ear} we may assume that a subset of $E(G_1)$ is the edge set of an ear of $G_1$ if and only if it is the edge set of an ear of $G_2$. Therefore, we may assume that $G_1$ and $G_2$ are cosimple.

Since the rank of $L(G_i)$ is equal to $|V(G_i)|$, we have $|V(G_1)|=|V(G_2)|$. When $|V(G_1)|\leq 4$, Lemmas \ref{n<4} and \ref{n=4} imply that the result holds. We claim that when $|V(G_1)|\geq 5$ we have that  $G_1$ and $G_2$ are 2-isomorphic. When  $|V(G_1)|=5$, the claim follows from Lemma \ref{n=5}. So we may assume that $|V(G_1)|\geq 6$. For each edge $e\in E(G_1)$, by Lemma \ref{contract-L} (3) we have $L(G_1/e)=L(G_2/e)$. By induction $G_1/e$ and $G_2/e$ are 2-isomorphic. So $G_1$ and $G_2$ are 2-isomorphic by Lemma \ref{contract-G}.
\end{proof}

\section{Acknowledgments}

The authors thank the referee pointing out a short proof to our main result and other improvements.

\end{document}